\newcommand{\R}{{\mathbb R}}\newcommand{\N}{{\mathbb N}}
\let\epsilon\varepsilon
\let\theta\vartheta
\newtheorem{theorem}{Theorem}[section]\newtheorem{lemma}[theorem]{Lemma}
\newtheorem{corollary}[theorem]{Corollary}
\newtheorem{remark}[theorem]{Remark}
\newcommand{\dist}{\operatorname{dist}}
\begin{document}
\title[Diffusive stability]{Diffusive stability and self-similar decay \\ for the harmonic map heat flow}

\author[T.~Lamm]{Tobias Lamm}
\address[T.~Lamm]{Institute for Analysis\\
Karlsruhe Institute of Technology \\ Englerstr. 2\\
76131 Karlsruhe\\ Germany}
\email{tobias.lamm@kit.edu}

\author[G.~Schneider]{Guido Schneider}
\address[G.~Schneider]{Institut f\"ur Analysis, Dynamik und Modellierung \\  Universit\"at Stuttgart\\  Pfaffenwaldring 57 \\  70569 Stuttgart, Germany}
\email{guido.schneider@mathematik.uni-stuttgart.de}

\thanks{We thank Peer Kunstmann for many helpful discussions on Besov spaces.
Funded by the Deutsche Forschungsgemeinschaft (DFG, German Research Foundation) - Project-ID 258734477 - SFB 1173}

\date{\today{}}

\begin{abstract}
In this paper we study the harmonic map heat flow on the euclidean space $\R^d$ and we show an unconditional uniqueness result for maps with small initial data in the homogeneous Besov space $\dot{B}^{\frac{d}{p}}_{p,\infty}(\R^d)$ where $d<p<\infty$. As a consequence we obtain decay rates for solutions of the harmonic map flow of the form $\|\nabla u(t) \|_{L^\infty(\R^d)}\leq Ct^{-\frac12}$.

Additionally, under the assumption of a stronger spatial localization of the initial conditions, we 
show that the temporal decay happens in a self-similar way.
We also explain that similar results hold for the biharmonic map heat flow and the semilinear heat equation with a power-type nonlinearity.
\end{abstract}
\maketitle

\section{Introduction}

The diffusive stability method is a tool which
allows us to prove stability results for parabolic partial differential equations in the case of a linearization possessing essential
spectrum up to the imaginary axis. It is based on algebraic decay rates 
which hold for the 
heat semigroup $ (e^{t \Delta})_{t \geq 0} $ on $\R^d$, defined by
$$
(e^{t \Delta} u_0)(x) = \frac{1}{(4 \pi t)^{d/2} } \int_{\R^d}
e^{-\frac{|x-y|^2}{4t}} u_0(y) dy.
$$
We have the decay
$$ \| e^{t \Delta} \|_{L^p \to L^{\infty}} 
\leq C t^{-\frac{d}{2p}} ,$$
for $ p \in [1,\infty) $,
due to the diffusion which occurs for spatially localized initial conditions and 
the decay 
$$ \| e^{t \Delta} \nabla \|_{L^{\infty} \to L^{\infty}}  \leq C t^{-\frac{1}{2}} $$
due to smoothing. These decays can be used  for instance to prove  stability results
in $ (L^p \cap C^1_b) (\R^d)$ 
of  the trivial solution $ u = 0 $ in semilinear heat equations
$$ 
\partial_t u = \Delta u + f(u,\nabla u) ,
$$  
with $ x \in \R^d $, $ t \geq 0 $, $ u(x,t) \in \R $, 
and  a smooth nonlinear function $ f $ containing terms of the form $ u^{m} (\partial_{1} u)^{m_1} \ldots  
(\partial_{d} u)^{m_d} $ and satisfying $f(0,0)=0$.
Since $ \partial_t u \sim  t^{-\frac{d}{2p}-1} $ and $  \Delta u \sim  t^{-\frac{d}{2p}-1}  $
for $ t \to \infty $ 
we have that the nonlinear terms behave like
\[
u^{m} (\partial_{1} u)^{m_1} \ldots  
(\partial_{d} u)^{m_d}  \sim  t^{- \frac{(m+m_1+\ldots + m_d)d}{2p} - \frac{m_1+\ldots + m_d}{2}} 
\]
for $ t \to \infty $. Hence, they 
are asymptotically irrelevant with respect to the
linear diffusion if 
\begin{equation*} 
\frac{d}{2p}+ 1 <  \frac{(m+m_1+\ldots + m_d)d}{2p} + \frac{m_1+\ldots + m_d}{2}.
\end{equation*}
As an example for $ d= 1 $ and  $ p= 1 $ all such terms are irrelevant except for $ u^2 $, $ u^3 $,
and $ u \partial_x u $. With this idea the stability of $ u = 0 $ in $ (L^p \cap C^1_b)(\R^d) $ follows
for 
$$ 
\partial_t u = \Delta u +  u^{m} (\partial_{x_1} u)^{m_1} \ldots  
(\partial_{x_d} u)^{m_d} ,
$$  
if the above condition is satisfied. An abstract existence result in this direction can be found in \cite{Uec99}.

After the introduction of this method in the papers of Fujita and Weisler \cite{Fujita,Wei81},
the last decades saw a successful application of
this tool to stability questions arising for pattern
forming systems, see e.g. \cite{BKL94,Schn98ARMA,SSSU12JDE,JNRZ14}. A summary and overview of various aspects of this diffusive stability  method can also be found in the monograph \cite{SU}. 

It is the goal of this paper 
to use  this tool for gaining new stability and uniqueness results
for dissipative semilinear 
geometric flow problems on $ \R^d $, in particular for the harmonic map heat flow.
\newline
\newline
In the following we let $N$ be a smooth Riemannian manifold which we assume to be isometrically embedded into some euclidean space $\R^m$. For $\delta>0$ sufficiently small we have a well defined nearest point projection $\pi:N_\delta\to N$, where $N_\delta:=\{ x\in \R^m:\, \dist(x,N)<\delta\}$ and we define the second fundamental form $A(y):\, T_y N \times T_yN \rightarrow T^\perp_y N$ for every $y\in N$ and $v_1,v_2\in T_yN$ by $A(y)(v_1,v_2)=-D^2 \pi(y)(v_1,v_2)$.
A map $u:\R^d \times [0,T)\to N$ is then a solution of the harmonic map heat flow with initial data $u(\cdot,0)=u_0:\R^d\to N$ if it solves the parabolic partial differential equation
\begin{align}
\partial_t u-\Delta u =A(u)(\nabla u,\nabla u).\label{start1}
\end{align}
For instance, in case of $ N = S^{m-1}  = \{ x\in \R^m : \| x \|_{\mathbb{R}^{m}} = 1 \}$, we compute 
\begin{eqnarray*}
	A(u)(\nabla u,\nabla u) &= &- 
	(\Delta u,u) u= -(\partial_k^2 u_\alpha) u_\alpha  u
	\\ & = &  -\partial_k ((\partial_k u_\alpha)u_\alpha)u
	+ (\partial_k u_\alpha)( \partial_k u_\alpha)u
	= u |\nabla u|^2,
\end{eqnarray*}
where here and in the following we use Einstein's summation convention and the fact that $ (\partial_k u_\alpha)u_\alpha = 0 $ which follows from the fact that $ u_\alpha u_\alpha = 1 $.

Rather general existence results for the harmonic map heat flow on euclidean space have been obtained by Koch and the first author \cite{KochLamm} for initial maps with a small oscillation and by Wang \cite{wang11} for initial maps with a small $BMO$-norm. These solutions were constructed by applying a fixed point argument on suitably constructed Banach spaces. As a byproduct of this method, the authors obtained a conditional uniqueness result for the harmonic map heat flow under the smallness assumptions on the initial data mentioned above. 

One of our main goals of this paper is to use the diffusive stability method in order to improve this conditional uniqueness result to an unconditional uniqueness result. The price we have to pay is that our initial data have to be small in a strict subspace of the above mentioned space of functions of bounded mean oscillation. More precisely, we require our initial data to be small in a homogeneous Besov space, see Theorem \ref{main1} for further details.

The deviation $ v $ of a trivial spatially constant equilibrium $ u_* $ satisfies a semilinear diffusion equation of the form 
\begin{equation}
\label{veq}
\partial_t v-\Delta v =A(u^*+v)(\nabla v,\nabla v).
\end{equation}
As above we have $ A(u^*+v)(\nabla v,\nabla v) \sim
t^{- \frac{d}{p} - 1}  $  for $ t \to \infty $, and so the nonlinearity 
is irrelevant w.r.t. to linear diffusion if $ p \in [1,\infty) $.
For $ p = \infty $ the linear and nonlinear terms are of the same order and a 
stability result on exponentially long time scales can be established, see Theorem \ref{thsec3b}.

For slightly stronger localized initial conditions the associated solutions 
of the linear heat equation decay in a self-similar way towards zero.
The same is true for semilinear heat equations with nonlinearities which are irrelevant in 
$ L^1 \cap C^1_b (\R^d)$ with the above counting. 
Using the discrete renormalization approach, cf. \cite{BK92,BKL94}, gives 
	that 
	the  renormalized solution
	$  t^{d/2} v(x \sqrt{t} ,t) $ of \eqref{veq} converges towards a 
	multiple of a Gaussian $ e^{-|x|^2/4}$
	for $ t \to \infty $. In the discrete renormalization approach
	instead of solving the PDE \eqref{veq} directly we consider an equivalent  sequence of 
	problems which converges formally towards the linear diffusion equation.
	Let
\begin{equation*} 
v_n(x,\tau) = L^{d n}v(L^n x,L^{2n}\tau),
\end{equation*}
with  $L > 1$ fixed and  $n \in \N$. Then  $v_n$ satisfies 
\begin{eqnarray}
\label{kass}
\partial_{\tau} v_n- \Delta v_n &= & L^{n(d + 2 - 2 - 2d)} D^2\pi( u^*+L^{-d n}v) (\nabla v_n,\nabla v_n) \\ 
&= & L^{-n d} D^2\pi( u^*+L^{-d n}v) (\nabla v_n,\nabla v_n) \nonumber
\end{eqnarray}
for $ \tau \in [L^{-2},1] $,
with
\begin{equation}
v_n(x,L^{-2})=L^d v_{n-1}(L x,1).
\end{equation}
The influence of the nonlinear terms vanishes 
for  $n \to \infty$ with a geometric rate. In the limit we obtain 
a linear diffusion equation.
With a simple  fixed point argument  
the convergence of  
$v_n|_{\tau = 1} $ towards the limit of the renormalized
linear diffusion  problem, namely a multiple of the  Gaussian $ e^{-|x|^2/4}$
can be established, cf. Theorem \ref{th41}.

A different approach for the harmonic map heat flow is the consideration of $ w = \nabla v $
which satisfies 
\begin{equation}
\label{weq}
\partial_t w-\Delta w =\nabla (A(u^*+v)( w,w)). 
\end{equation}
We show that the above mentioned discrete renormalization approach can also be applied to this system of equations.

Finally, we show that our technique can be extended to various other semilinear heat equations on euclidean space, such as equations with a power-type nonlinearity or higher order equations, such as the biharmonic map heat flow.

A similar handling of dissipative quasilinear geometric flow problems,
such as  the
Ricci-DeTurck flow, the mean curvature flow, the Yamabe flow, and the Willmore flow of graphs,
will be the topic of future research.

\medskip

{\bf Notation.}
In the following many possibly different constants are denoted with the same symbol $ C $ if they can be chosen independent of time $ t $.

\section{Stability and uniqueness results for the harmonic map flow}

\label{sec2}
Our first main result about the harmonic map heat flow is given by an asymptotic stability and uniqueness result assuming smallness conditions on a certain homogeneous Besov norm of the inital map $u_0$. 

We note that due to the unboundedness of the domain the linearization 
around $ u= const. $  for \eqref{start1} possesses essential spectrum up to the imaginary axis and so the principle of linearized stability does not apply directly in this situation. Therefore, we use the diffusive stability method for establishing the following results. We start by extending a stability result for the harmonic map flow which was originally due to Soyeur \cite{Soyeur}. Using the notation $\dot{W}^{1,p}(\R^d,N)$ for the homogeneous Sobolev space, his result is as follows
\begin{theorem}\label{soy}
There exists a constant $\varepsilon>0$ depending only on $d$ and $N$ such that if $u_0\in  \dot{W}^{1,p}(\R^d,N)$ with $d<p<\infty$ and $\|\nabla u_0\|_{L^d} <\varepsilon$, then we have the estimate
\begin{align}
t^{\frac12-\frac{d}{2p}}\|\nabla u(t)\|_{L^p}  \leq C\|\nabla u_0\|_{L^d}, \label{pdecay}
\end{align}
where $C=C(d,m,p)$, for every solution $u\in C^0([0,\infty), \dot{W}^{1,p}(\R^d,N))$ of \eqref{start1}.
\end{theorem}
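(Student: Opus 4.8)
The plan is to run a small-data bootstrap argument built on Duhamel's formula and the smoothing estimates for the heat semigroup recalled in the introduction. Writing \eqref{start1} in mild form and differentiating in space gives
\[
\nabla u(t) = e^{t\Delta}\nabla u_0 + \int_0^t \nabla e^{(t-s)\Delta}\big[A(u(s))(\nabla u(s),\nabla u(s))\big]\,ds ,
\]
and the quantity to control is the scale-invariant weighted norm
\[
M(t):=\sup_{0<s\le t} s^{\frac12-\frac{d}{2p}}\,\|\nabla u(s)\|_{L^p},
\]
whose weight is exactly the exponent appearing in \eqref{pdecay}. Since the solution lies in $C^0([0,\infty),\dot W^{1,p})$, the function $s\mapsto s^{\frac12-\frac{d}{2p}}\|\nabla u(s)\|_{L^p}$ is continuous, and because $\tfrac12-\tfrac{d}{2p}>0$ it tends to $0$ as $s\to 0^+$; hence $M$ is finite, continuous, non-decreasing and satisfies $M(0^+)=0$. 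For the linear term I would invoke $\nabla u_0\in L^d$ and the smoothing bound $\|e^{t\Delta}\|_{L^d\to L^p}\le C t^{-\frac d2(\frac1d-\frac1p)}=C t^{-\frac12+\frac{d}{2p}}$, which gives $t^{\frac12-\frac{d}{2p}}\|e^{t\Delta}\nabla u_0\|_{L^p}\le C\|\nabla u_0\|_{L^d}$.

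For the nonlinear term, since $u(s)$ takes values in the compact manifold $N$ the second fundamental form is bounded and $|A(u(s))(\nabla u(s),\nabla u(s))|\le C|\nabla u(s)|^2$, so the nonlinearity lies in $L^{p/2}$ with norm $\le C\|\nabla u(s)\|_{L^p}^2$. Splitting $e^{\tau\Delta}=e^{\tau\Delta/2}e^{\tau\Delta/2}$ and combining $\|\nabla e^{\tau\Delta/2}\|_{L^p\to L^p}\le C\tau^{-1/2}$ with $\|e^{\tau\Delta/2}\|_{L^{p/2}\to L^p}\le C\tau^{-\frac{d}{2p}}$ yields $\|\nabla e^{\tau\Delta}\|_{L^{p/2}\to L^p}\le C\tau^{-\frac12-\frac{d}{2p}}$. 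Inserting the definition of $M$ leads to
\[
\Big\|\int_0^t\nabla e^{(t-s)\Delta}\big[A(u)(\nabla u,\nabla u)\big]\,ds\Big\|_{L^p}\le CM(t)^2\int_0^t (t-s)^{-\frac12-\frac{d}{2p}}\,s^{-1+\frac dp}\,ds ,
\]
and after the substitution $s=t\sigma$ the powers of $t$ collapse to $t^{-\frac12+\frac{d}{2p}}$ times the Beta integral $\int_0^1(1-\sigma)^{-\frac12-\frac{d}{2p}}\sigma^{-1+\frac dp}\,d\sigma$. This is the crux: the factor $\sigma^{-1+d/p}$ is always integrable at $\sigma=0$, while $(1-\sigma)^{-\frac12-\frac{d}{2p}}$ is integrable at $\sigma=1$ \emph{precisely} when $\tfrac12+\tfrac{d}{2p}<1$, i.e. when $d<p$ — exactly the standing hypothesis, and the reason the rate in \eqref{pdecay} is the natural one.

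Multiplying the two estimates by $t^{\frac12-\frac{d}{2p}}$ and taking the supremum over $s\le t$ closes the argument in the quadratic inequality
\[
M(t)\le C_1\|\nabla u_0\|_{L^d}+C_2\,M(t)^2 ,
\]
with $C_1,C_2$ depending only on $d,m,p$. Choosing $\varepsilon$ so small that $4C_1C_2\varepsilon<1$, the values forbidden by this inequality form a gap separating a lower branch $[0,m_-]$ from an upper branch; since $M$ is continuous with $M(0^+)=0$ it cannot cross the gap and must stay on the lower branch, so $M(t)\le 2C_1\|\nabla u_0\|_{L^d}$ for all $t$, which is \eqref{pdecay}. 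I expect the two genuinely delicate points to be the bookkeeping ensuring that $M$ is continuous with $M(0^+)=0$ for the given solution class — so that the continuity argument is legitimate — and the borderline integrability at $\sigma=1$; both hinge on the strict inequality $d<p$.
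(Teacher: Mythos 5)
Your argument is correct and is essentially the proof the paper gives for its generalization, Theorem \ref{newsoy}: the same Duhamel decomposition, the same $L^{p/2}\to L^p$ smoothing bound for the quadratic nonlinearity, the same Beta-integral computation hinging on $d<p$, and the same quadratic inequality $M\le C_1\|\nabla u_0\|_{L^d}+C_2M^2$ closed by continuity of the weighted sup-norm; your only deviation is to bound the linear term via $\|e^{t\Delta}\|_{L^d\to L^p}\le Ct^{-\frac12+\frac{d}{2p}}$ rather than through the Besov-norm characterization, which is the right adaptation to $\dot{W}^{1,d}$ data. The one caveat, which the paper itself sidesteps by adding the hypothesis $2\le p$ in Theorem \ref{newsoy}, is that the convolution estimate on $L^{p/2}$ requires $p\ge 2$, so the corner case $d=1$, $1<p<2$ would need a separate treatment.
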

\begin{remark}
In the same paper Soyeur also showed the existence of a solution $u\in C^0([0,\infty), \dot{W}^{1,p}(\R^d,N))$ for small initial data in the above sense. Additionally, the smallness assumption on the $L^d$-norm of $\nabla u_0$ implies that the $BMO$-norm of $u_0$ is small and hence, by a paper of C. Wang \cite{wang11}, this implies the existence of a global smooth solution of the harmonic map flow. 
\end{remark}
\begin{remark}
This result can be extended to maps from complete Riemannian manifolds for which one has good heat kernel estimates as it was done in Li and Wang \cite{LiWang} for \eqref{pdecay}. Examples of such manifolds include manifolds with positive Ricci curvature.
\end{remark}
The original result of Soyeur already extended a previous result of Struwe \cite{struwe}, who was the first one to show a convergence result for the harmonic map heat flow on $\R^d$ to a constant map with additional decay properties for the first derivative. More precisely, he showed in Theorem 7.1 of the paper mentioned above, that for smooth initial maps $u_0$ so that $\|\nabla u_0\|_{L^\infty}$ is bounded and $\|\nabla u_0\|_{L^2}$ is sufficiently small, the unique smooth solution of the harmonic map heat flow satisfies for $t$ large enough $\|\nabla u(\cdot,t)\|_{L^\infty}\leq Ct^{-\frac12}$ and hence converges to a constant map. 

As a first result we improve Theorem \ref{soy} for initial data satisfying a slightly less restrictive smallness condition. More precisely, for a map $v:\R^d\to \R^m$ we denote by $G(t)v:=G_t \star v$ its heat extension, i.e. $G(t)v$ is a solution of the heat equation with initial data $v$. Here $G(x,t)=\frac{1}{(4\pi t)^{d/2}} e^{\frac{-|x|^2}{4t}}$, $(x,t)\in \R^d\times (0,\infty)$, is the standard heat kernel. For $d<p\leq \infty$ we define the homogeneous Besov space $\dot{B}^{\frac{d}{p}}_{p,\infty}(\R^d)$ as the set of functions for which the norm
\[
\|v\|_{\dot{B}^{\frac{d}{p}}_{p,\infty}}:=\sup_{t>0} t^{\frac12 -\frac{d}{2p}} \|\nabla (G(t)v)\|_{L^p}
\]
is finite. It follows that
\[
\dot{W}^{1,d}(\R^d) \subset \dot{B}^{\frac{d}{p}}_{p,\infty}(\R^d) \subset BMO(\R^d) \subset \dot{B}^0_{\infty,\infty}(\R^d)
\]
for every $d<p< \infty$, where each of these inclusions is strict (see e.g. \cite{Cannone}).

Now we are in a position to state our first main result.
\begin{theorem}\label{newsoy}
There exists a constant $\varepsilon>0$ depending only on $d$ and $N$ such that if $u_0\in \dot{B}^{\frac{d}{p}}_{p,\infty}(\R^d,N)$ with $d<p<\infty$, $2\leq p$ and $\|u_0\|_{\dot{B}^{\frac{d}{p}}_{p,\infty}} <\varepsilon$, then we have the estimate
\[
t^{\frac12-\frac{d}{2p}}\|\nabla u(t)\|_{L^p} \leq C\| u_0\|_{\dot{B}^{\frac{d}{p}}_{p,\infty}},
\]
where $C=C(d,m,p)$, for every solution $u:(0,\infty)\times \R^d\to N$ of \eqref{start1} with 
\[
t^{\frac12-\frac{d}{2p}}\nabla u(t,x) \in C^0([0,\infty), L^{p}(\R^d))
\]
and 
\[
\lim_{t\searrow 0} t^{\frac12-\frac{d}{2p}} \|\nabla u(t)\|_{L^p} =0.
\]
\end{theorem}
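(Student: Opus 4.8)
The plan is to run a Duhamel/bootstrap argument entirely at the level of the gradient, exploiting that the nonlinearity $A(u)(\nabla u,\nabla u)$ is quadratic in $\nabla u$ and that $A$ obeys the pointwise bound $|A(u)(\nabla u,\nabla u)|\le C|\nabla u|^2$ with $C=C(N)$ along the (bounded) image of $u$. Writing the solution through the variation-of-constants formula and differentiating,
\[
\nabla u(t)=\nabla G(t)u_0+\int_0^t \nabla G(t-s)\,A(u(s))(\nabla u(s),\nabla u(s))\,ds,
\]
I abbreviate $b:=\|u_0\|_{\dot{B}^{\frac{d}{p}}_{p,\infty}}$ and introduce the scaling-invariant quantity
\[
M(T):=\sup_{0<t\le T} t^{\frac12-\frac{d}{2p}}\,\|\nabla u(t)\|_{L^p},
\]
so that the goal becomes $M(T)\le Cb$ uniformly in $T$. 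The linear term is controlled for free: by the very definition of the Besov norm one has $t^{\frac12-\frac{d}{2p}}\|\nabla G(t)u_0\|_{L^p}\le b$ for all $t>0$.

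For the nonlinear term I would first turn the pointwise bound on $A$ into $\|A(u(s))(\nabla u,\nabla u)\|_{L^{p/2}}\le C\|\nabla u(s)\|_{L^p}^2$; this is exactly where the hypothesis $p\ge 2$ is used, since $L^{p/2}$ must be an admissible endpoint for the heat kernel. Combining this with the gradient heat-semigroup estimate $\|\nabla G(\tau)\|_{L^{p/2}\to L^p}\le C\tau^{-\frac12-\frac{d}{2p}}$ and inserting $\|\nabla u(s)\|_{L^p}\le M(T)\,s^{\frac{d}{2p}-\frac12}$ yields
\[
t^{\frac12-\frac{d}{2p}}\Big\|\int_0^t \nabla G(t-s)A(u)(\nabla u,\nabla u)\,ds\Big\|_{L^p}\le C\,M(T)^2\, t^{\frac12-\frac{d}{2p}}\int_0^t (t-s)^{-\frac12-\frac{d}{2p}} s^{\frac{d}{p}-1}\,ds.
\]
The substitution $s=t\sigma$ produces a power of $t$ equal to $t^{\frac{d}{2p}-\frac12}$, which cancels the prefactor exactly and leaves a Beta integral $\int_0^1(1-\sigma)^{-\frac12-\frac{d}{2p}}\sigma^{\frac dp-1}\,d\sigma$. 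Both the integrability of $(t-s)^{-\frac12-\frac{d}{2p}}$ at $s=t$ and the finiteness of this integral at its upper endpoint force $\frac12+\frac{d}{2p}<1$, i.e. $d<p$; the lower endpoint is harmless since $\frac{d}{p}>0$. Thus the two standing hypotheses $2\le p$ and $d<p$ are precisely what make the estimate close, and we arrive at
\[
M(T)\le b+C_*M(T)^2 .
\]

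To close this quadratic inequality I would use a continuity argument, for which the two regularity assumptions are tailored: $t^{\frac12-\frac{d}{2p}}\nabla u(t)\in C^0([0,\infty),L^p)$ makes $T\mapsto M(T)$ continuous and nondecreasing, while $\lim_{t\searrow 0} t^{\frac12-\frac{d}{2p}}\|\nabla u(t)\|_{L^p}=0$ guarantees $M(T)\to 0$ as $T\searrow 0$. Choosing $\varepsilon$ so small that $8C_*\varepsilon<1$, the set $\{T:M(T)\le 2b\}$ is nonempty (small $T$), closed (continuity), and open: on it the inequality improves $M(T)$ to $b(1+4C_*b)<\tfrac32 b<2b$, so by continuity the bound cannot be saturated. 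Since $(0,\infty)$ is connected the set is all of $(0,\infty)$, giving $M(T)\le \tfrac32 b$ for every $T$ and hence the asserted estimate.

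The step I expect to be the genuine obstacle is not the quadratic estimate but the a priori justification of the Duhamel representation for a solution assumed to possess only the weak regularity above: one must verify that such a $u$ is in fact a mild solution for which the nonlinear integral converges absolutely in $L^p$ and legitimately represents $\nabla u$, paying particular attention to the behaviour across the initial layer $s\to 0$. It is exactly here that the smallness of $b$ together with the vanishing condition $\lim_{t\searrow 0}t^{\frac12-\frac{d}{2p}}\|\nabla u(t)\|_{L^p}=0$ must be exploited, and this is the point that upgrades the conditional uniqueness of \cite{KochLamm,wang11} to the unconditional statement claimed here.
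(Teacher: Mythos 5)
Your argument is essentially identical to the paper's proof: the same Duhamel representation for $\nabla(u-G(t)u_0)$, the same choice $\sigma=1$, $r=p/2$, $q=p$ in the heat-kernel estimate, the same weighted quantity $m(T)$, the same Beta-integral cancellation requiring $d<p$, and the same closure of the quadratic inequality via $m(0)=0$ and continuity. The only cosmetic difference is that you spell out the open/closed continuity argument, whereas the paper phrases it as $Cm(T)^2-m(T)+C\|u_0\|_{\dot{B}^{d/p}_{p,\infty}}\geq 0$ forcing $m(T)$ onto the small root; your concluding remark about justifying the mild-solution representation is a fair caveat, but the paper takes that step for granted as well.
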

\begin{remark}
Note that the existence of such solutions follows for example from the work of Soyeur under the assumption that $u_0\in \dot{W}^{1,p}(\R^d,N)$ with a sufficiently small norm.
\end{remark}
In the proof of this result we need a standard estimate on convolutions of the heat kernel. Note that it relies on Young's convolution inequality.
\begin{lemma}\label{kernel}
Let $G(x,t)=\frac{1}{(4\pi t)^{d/2}} e^{\frac{-|x|^2}{4t}}$, $(x,t)\in \R^d\times (0,\infty)$, be the heat kernel. Using the notation $G(t)f=G(t) \star f$ we have the estimate
\begin{align}
\| \nabla^\sigma G (t)  \|_{\mathcal{L}(L^r,L^q)} \leq C t^{-\frac{d}{2}(\frac{1}{r}-\frac{1}{q})-\frac{\sigma}{2}} , \label{heat1}
\end{align}
where $1\leq r\leq q\leq \infty$, $\sigma\in \N_0$ and $C<\infty$ is a constant. Additionally, we have that for some constant $C<\infty$
\begin{align}
\int_0^t (t-s)^{-\alpha}s^{-\beta} \, ds=C t^{1-\alpha-\beta} \label{heat2}
\end{align}
if both $0<\alpha,\beta<1$.
\end{lemma}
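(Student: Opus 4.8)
The plan is to treat the two estimates separately, since they are essentially independent. For the semigroup bound \eqref{heat1} the idea is to write $\nabla^\sigma G(t) f = (\nabla^\sigma G_t)\star f$, where $G_t(x)=\frac{1}{(4\pi t)^{d/2}}e^{-|x|^2/(4t)}$, and then apply Young's convolution inequality. Young's inequality gives, for the exponent $s$ determined by $\frac1s = 1 + \frac1q - \frac1r$, the bound $\|(\nabla^\sigma G_t)\star f\|_{L^q}\leq \|\nabla^\sigma G_t\|_{L^s}\|f\|_{L^r}$. The hypothesis $1\leq r\leq q\leq\infty$ is exactly what is needed to guarantee $\frac1s\in[0,1]$, i.e.\ $s\in[1,\infty]$, so that Young's inequality is legitimately applicable.

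First I would reduce the computation of $\|\nabla^\sigma G_t\|_{L^s}$ to the case $t=1$ via the parabolic scaling $G_t(x)=t^{-d/2}G_1(x/\sqrt t)$, which yields $(\nabla^\sigma G_t)(x) = t^{-d/2-\sigma/2}(\nabla^\sigma G_1)(x/\sqrt t)$. Changing variables $x=\sqrt t\,y$ in the $L^s$-norm then gives $\|\nabla^\sigma G_t\|_{L^s} = t^{-d/2-\sigma/2+\frac{d}{2s}}\|\nabla^\sigma G_1\|_{L^s}$. Since $\frac{1}{s} = 1+\frac1q-\frac1r$ we have $-\frac d2 + \frac{d}{2s} = -\frac d2\left(\frac1r-\frac1q\right)$, so the power of $t$ is precisely $-\frac d2\left(\frac1r-\frac1q\right)-\frac\sigma2$, which is the claimed exponent. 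The remaining point is that the constant $\|\nabla^\sigma G_1\|_{L^s}$ is finite; this is immediate because $\nabla^\sigma G_1$ is a polynomial times the Gaussian $e^{-|x|^2/4}$ and hence lies in $L^s(\R^d)$ for every $s\in[1,\infty]$.

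For the Beta-integral identity \eqref{heat2} I would substitute $s=t\tau$, $ds = t\,d\tau$, which turns the integral into $t^{1-\alpha-\beta}\int_0^1(1-\tau)^{-\alpha}\tau^{-\beta}\,d\tau$. The remaining integral is the Euler Beta function $B(1-\beta,1-\alpha)=\frac{\Gamma(1-\alpha)\Gamma(1-\beta)}{\Gamma(2-\alpha-\beta)}$, which is a finite constant exactly when $1-\alpha>0$ and $1-\beta>0$, i.e.\ under the hypothesis $0<\alpha,\beta<1$. This produces the identity with $C=B(1-\beta,1-\alpha)$.

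Neither step presents a genuine obstacle; the content is standard. The only places requiring care are bookkeeping ones: verifying that the Young exponent $s$ satisfies $s\geq 1$ under $r\leq q$ (so that the convolution inequality holds rather than being merely formal), and checking the finiteness/convergence conditions — integrability of the scaled Gaussian derivative in $L^s$ for the first part, and convergence of the Beta integral near both endpoints $\tau=0$ and $\tau=1$ for the second. Tracking the precise power of $t$ through the scaling change of variables is the one computation worth carrying out explicitly, since it is where the exponent $-\frac d2\left(\frac1r-\frac1q\right)-\frac\sigma2$ originates.
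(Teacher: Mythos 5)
Your proof is correct and follows exactly the route the paper indicates (the lemma is stated without a written-out proof, with only the remark that it relies on Young's convolution inequality): Young's inequality with $\tfrac1s=1+\tfrac1q-\tfrac1r$, the parabolic scaling $\nabla^\sigma G_t(x)=t^{-d/2-\sigma/2}(\nabla^\sigma G_1)(x/\sqrt t)$ to extract the power of $t$, and the substitution $s=t\tau$ reducing \eqref{heat2} to the Beta function. All exponent bookkeeping and convergence checks are accurate; nothing further is needed.
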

Now we are in a position to prove Theorem \ref{newsoy}.
\begin{proof}[Proof of Theorem \ref{newsoy}:]
We start by noting that
\[
\nabla (u(x,t)-G(t) \star u_0 (x)) =\int_0^t \int_{\R^d} \nabla G(x-y,t-s) (A(u)(\nabla u,\nabla u)(y,s))\, dy\, ds.
\]
Using the estimate \eqref{heat1} with $\sigma=1$, $r=\frac{p}{2}$ and $q=p$ we get for every $ t>0$
\begin{align*}
\|\nabla (u(t)-G(t)u_0)\|_{L^p} \leq& C\int_0^t \| |\nabla u(s)|^2 \|_{L^{\frac{p}{2}}} (t-s)^{-(\frac12 +\frac{d}{2p})} \, ds\\
\leq& C\int_0^t \| \nabla u(s) \|^2_{L^p} (t-s)^{-(\frac12 +\frac{d}{2p})} \, ds.
\end{align*}
Setting 
\[
m(T) =\sup_{0\leq t \leq T} t^{\frac12 -\frac{d}{2p}} \|\nabla u(t)\|_{L^p},
\]
the last inequality implies for all $t>0$
\begin{align*}
 t^{\frac12 -\frac{d}{2p}} \|\nabla u(t)\|_{L^p}\leq& C\|u_0\|_{\dot{B}^{\frac{d}{p}}_{p,\infty}} +Ct^{\frac12 -\frac{d}{2p}} m(T)^2 \int_0^t (t-s)^{-(\frac12 +\frac{d}{2p})} s^{-1+\frac{d}{p}} \, ds\\
 \leq& C\|u_0\|_{\dot{B}^{\frac{d}{p}}_{p,\infty}} + Cm(T)^2,
 \end{align*}
 where we used the identity \eqref{heat2} with $0<\alpha=\frac12+\frac{d}{2p}<1$ and $0<\beta=1-\frac{d}{p}<1$. Note that it is exactly here that we have to use the assumption $d<p<\infty$. 
 
 Taking the supremum over all $0\leq t \leq T$ we thus obtain for every $0<T<\infty$
 \[
 Cm(T)^2 -m(T)+C \|u_0\|_{\dot{B}^{\frac{d}{p}}_{p,\infty}} \geq 0
 \]
 and for $\varepsilon>0$ small enough this finally implies for all $T<\infty$
 \[
 m(T) \leq C\|u_0\|_{\dot{B}^{\frac{d}{p}}_{p,\infty}} \leq C\varepsilon.
 \]
 Here we used that $m(0)=0$. By definition this estimate implies that
 \[
 t^{\frac12- \frac{d}{2p}}\|\nabla u(t)\|_{L^p}  \leq C \|u_0\|_{\dot{B}^{\frac{d}{p}}_{p,\infty}} \]
 for every $0<t<\infty$, as claimed.
\end{proof}

In the next theorem we improve  a uniqueness result for solutions of the harmonic map heat flow with initial data with small $BMO$-norm. 
For this we define the Banach space $X$ as the space of functions such that the norm
\begin{align*}
\|u\|_{X}:=& \sup_{0<t<\infty}\left( \|u(t)\|_{L^\infty(\R^d)} +t^{\frac12} \|\nabla u(t)\|_{L^\infty(\R^d)}\right) \\
&+\sup_{x\in \R^d} \sup_{0<R<\infty} R^{-\frac{d}{2}}\|\nabla u\|_{L^2(B_R(x)\times(0,R^2))}
\end{align*}
is finite.
It was shown by the first author and Koch \cite{KochLamm} that there exist constants $\varepsilon>0$, $C>0$ such that for every $u_0:\R^d \to N$ satisfying $\|u_0-P\|_{L^\infty} <\varepsilon$, where $P\in N$ is some arbitrary point, there exists a global solution $u\in P+X$ of \eqref{start1}. Moreover, the solution is unique in the ball
\[
B^{X}(P,C\varepsilon)=\{ u:\, \|u-P\|_X\leq C\varepsilon\}.
\]
This result was later on extended to initial data $u_0\in BMO(\R^d)$ with small BMO-norm by Wang \cite{wang11}. He obtained the uniqueness of the solution in the set $B^{X}(G(t)u_0,C\varepsilon)$.

As our second main result for the harmonic map flow we improve this conditional uniqueness result to an unconditional uniqueness result (i.e. we no longer have to restrict to the class of solutions in $X$ with small $X$-norm) but the price we have to pay is to go from $BMO$-initial data to homogeneous Besov initial data.
\begin{theorem}\label{main1}
Let $N$ be a smooth and closed manifold. There exists $\varepsilon>0$, $C>0$ such that for every $u_0\in \dot{B}^{\frac{d}{p}}_{p,\infty}(\R^d,N)$ with $d<p<\infty$, $\|u_0\|_{\dot{B}^{\frac{d}{p}}_{p,\infty}} <\varepsilon$ and every solution $u:(0,\infty)\times \R^d\to N$ of \eqref{start1} with 
\[
t^{\frac12-\frac{d}{2p}}\nabla u(t,x) \in C^0([0,\infty), L^{p}(\R^d))
\]
and 
\[
\lim_{t\searrow 0} t^{\frac12-\frac{d}{2p}} \|\nabla u(t)\|_{L^p} =0,
\]
we have the estimate
\[
\| u -G(t)u_0\|_X\leq C\varepsilon,
\]
i.e. there exists only one solution under these assumptions.
\end{theorem}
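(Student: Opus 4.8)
The plan is to prove the quantitative bound $\|u-G(t)u_0\|_X\leq C\varepsilon$ directly; once this is known, the solution $u$ lies in the ball $B^X(G(t)u_0,C\varepsilon)$, and the uniqueness statement of Wang recalled above (extending the result of Koch and the first author) forces $u$ to coincide with the unique solution constructed there. Throughout I write $w:=u-G(t)u_0$ and $F:=A(u)(\nabla u,\nabla u)$, and I use that $N$ is closed so that $|F|\leq C|\nabla u|^2$. The starting point is Theorem \ref{newsoy}, which already yields $m:=\sup_{t>0}t^{\frac12-\frac{d}{2p}}\|\nabla u(t)\|_{L^p}\leq C\|u_0\|_{\dot{B}^{\frac{d}{p}}_{p,\infty}}\leq C\varepsilon$. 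The remaining three pieces of the $X$-norm must be extracted from this single critical bound through the Duhamel representation $w(t)=\int_0^t G(t-s)F(s)\,ds$ and the heat-kernel estimates of Lemma \ref{kernel}.

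The first step is a bootstrap in the integrability exponent, upgrading the critical $L^p$-decay of $\nabla u$ to the whole scale $q\in[p,\infty]$. Writing $\nabla G(t)u_0=G(t/2)\,[\nabla G(t/2)u_0]$ and using \eqref{heat1} with $\sigma=0$ together with the definition of the Besov norm, one checks that the linear part obeys $t^{\frac12-\frac{d}{2q}}\|\nabla G(t)u_0\|_{L^q}\leq C\|u_0\|_{\dot{B}^{\frac{d}{p}}_{p,\infty}}$ for every $q\geq p$. For the nonlinear part I place $F(s)=|\nabla u(s)|^2$ in $L^{q_0/2}$ (where $\|\nabla u(s)\|_{L^{q_0}}$ is already controlled), apply \eqref{heat1} with $\sigma=1$, and then \eqref{heat2}; the time integral converges and reproduces exactly the decay exponent $-(\frac12-\frac{d}{2q})$ precisely when $\frac{d}{q_0}-\frac{d}{2q}+\frac12<1$. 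Since $p>d$ this condition permits a definite increase of the exponent at each stage, so that starting from $q_0=p$ a finite chain $p=q_0<q_1<\dots<q_k=\infty$ reaches $L^\infty$, giving $t^{\frac12-\frac{d}{2q}}\|\nabla u(t)\|_{L^q}\leq C\varepsilon$ for all $q\in[p,\infty]$, and in particular $t^{\frac12}\|\nabla w(t)\|_{L^\infty}\leq C\varepsilon$. The same Duhamel formula with $\sigma=0$, $F(s)\in L^{p/2}$ and $L^{p/2}\to L^\infty$ smoothing gives, via \eqref{heat2} with $\alpha=\frac{d}{p}$ and $\beta=1-\frac{d}{p}$, the uniform bound $\|w(t)\|_{L^\infty}\leq Cm^2\leq C\varepsilon$. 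This disposes of the two supremum terms in $\|\cdot\|_X$.

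The remaining term is the Carleson-type quantity $\sup_{x_0,R}R^{-\frac{d}{2}}\|\nabla w\|_{L^2(B_R(x_0)\times(0,R^2))}$, and I expect the real difficulty to lie here: the $L^\infty$-gradient bound alone only gives $\|\nabla w(t)\|_{L^\infty}^2\lesssim\varepsilon^2 t^{-1}$, whose $t$-integral diverges logarithmically at $t=0$, so a crude estimate fails. The remedy is to split the time integral at $t=R^2/2$. On $(R^2/2,R^2)$ the gradient is already regularized and the $L^\infty$-bound gives $\int_{R^2/2}^{R^2}\!\int_{B_R}|\nabla w|^2\leq CR^d\varepsilon^2\int_{R^2/2}^{R^2}t^{-1}\,dt\leq CR^d\varepsilon^2$. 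On $(0,R^2/2)$ I instead use Hölder on $B_R$ to pass to the critical exponent, $\int_{B_R}|\nabla w|^2\leq CR^{d(1-\frac2p)}\|\nabla w(t)\|_{L^p}^2\leq CR^{d(1-\frac2p)}\varepsilon^2 t^{-(1-\frac{d}{p})}$, whose singularity $t^{-(1-\frac{d}{p})}$ is now integrable because $\frac{d}{p}<1$; the time integral contributes a factor $R^{2d/p}$ and the powers of $R$ recombine to $R^d$. Both halves are thus bounded by $CR^d\varepsilon^2$ uniformly in $x_0$ and $R$, which yields the Carleson bound $\leq C\varepsilon$. Collecting the three estimates gives $\|w\|_X\leq C\varepsilon$, and the uniqueness conclusion follows.

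Two points of rigor I would address along the way. First, the Duhamel representation and the finiteness of the intermediate $L^q$-norms should be justified from the mild formulation already used in the proof of Theorem \ref{newsoy}, combined with parabolic regularity for $t>0$, so that the chain of estimates can be read as genuine a priori bounds (exactly as the quantity $m(T)$ with $m(0)=0$ was used there). Second, I would note that every quantity entering the argument — the Besov norm, $m$, and the localized $L^2$-Carleson norm — is invariant under the parabolic scaling $u(x,t)\mapsto u(\lambda x,\lambda^2 t)$; this makes the $R$-uniform Carleson bound consistent and, if desired, reduces its proof to the normalized case $R=1$.
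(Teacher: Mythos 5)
Your proof is correct and follows the same overall architecture as the paper: take the critical bound $\sup_{t>0}t^{\frac12-\frac{d}{2p}}\|\nabla u(t)\|_{L^p}\leq C\varepsilon$ from Theorem \ref{newsoy}, feed $|\nabla u|^2$ into the Duhamel formula together with the heat-kernel estimates of Lemma \ref{kernel} to control the three pieces of the $X$-norm of $u-G(t)u_0$, and then invoke the conditional uniqueness of Koch--Lamm and Wang in the ball $B^{X}(G(t)u_0,C\varepsilon)$. The two places where you deviate are both in the execution. First, for $t^{\frac12}\|\nabla(u-G(t)u_0)\|_{L^\infty}$ the paper applies \eqref{heat1} once with $r=\frac p2$, $q=\infty$, $\sigma=1$, producing the kernel $(t-s)^{-(\frac12+\frac{d}{p})}$; the hypothesis $\alpha<1$ in \eqref{heat2} then requires $p>2d$, so the paper's one-shot estimate implicitly relies on either $p>2d$ or a preliminary embedding $\dot{B}^{\frac dp}_{p,\infty}\subset\dot{B}^{\frac dq}_{q,\infty}$ into a larger exponent. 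Your finite bootstrap $p=q_0<q_1<\dots<q_k=\infty$, with the convergence condition $\frac{d}{q_0}-\frac{d}{2q}<\frac12$ checked at each stage and the linear part handled by the semigroup factorization $\nabla G(t)u_0=G(t/2)\,\nabla G(t/2)u_0$, covers the whole range $d<p<\infty$ directly and is the more robust version of this step. Second, for the Carleson term the paper uses only your ``small-time'' half --- H\"older in space down to $L^p$ and the integrable singularity $t^{\frac dp-1}$ --- on all of $(0,R^2)$, so your split at $t=R^2/2$ and the $L^\infty$ bound on the upper half are unnecessary, though harmless. Everything else (the uniform bound on $\|u-G(t)u_0\|_{L^\infty}$ via $\alpha=\frac dp$, $\beta=1-\frac dp$, the recombination of the powers of $R$ to $R^d$, and the final appeal to conditional uniqueness) matches the paper's argument.
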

\begin{remark}
Since 
\[
\sup_{t>0} t^{\frac12} \|\nabla (G(t)u_0)\|_{L^\infty(\R^d)} \simeq \|u_0\|_{\dot{B}^0_{\infty,\infty}},
\]
respectively
\[
\sup_{x\in \R^d} \sup_{0<R<\infty} R^{-\frac{d}{2}}\|\nabla (G(t)u_0)\|_{L^2(B_R(x)\times(0,R^2))} \simeq \|u_0\|_{BMO}
\]
and since we have the inclusions
\[
\dot{B}^{\frac{d}{p}}_{p,\infty} \subset BMO \subset \dot{B}^0_{\infty,\infty}
\]
for every $d<p<\infty$, it follows from the above estimates that for every solution $u\in C^0([0,\infty), \dot{W}^{1,p}(\R^d,N))$ of \eqref{start1} as above with initial data $u_0$ satisfying $\|u_0\|_{\dot{B}^{\frac{d}{p}}_{p,\infty}} <\varepsilon$ we have
\[
\sup_{t>0} t^{\frac12} \|\nabla u \|_{L^\infty(\R^d)} +\sup_{x\in \R^d} \sup_{0<R<\infty} R^{-\frac{d}{2}}\|\nabla u\|_{L^2(B_R(x)\times(0,R^2))} \leq C\|u_0\|_{\dot{B}^{\frac{d}{p}}_{p,\infty}}.
\]
\end{remark}
\begin{proof}[Proof of Theorem \ref{main1}:]
We fix $p>d$ and we note that we get from Theorem \ref{newsoy} the estimate 
\[
 \sup_{t>0} t^{\frac12- \frac{d}{2p}}\|\nabla u(t)\|_{L^p}  \leq C \|u_0\|_{\dot{B}^{\frac{d}{p}}_{p,\infty}}.
\]
Next we use again the estimate \eqref{heat1} with $\sigma=1$, $r=\frac{p}{2}$ and $q=\infty$ to conclude for every $ t>0$
\begin{align*}
\|\nabla (u(t)-G(t)u_0)\|_{L^\infty} \leq& C\int_0^t \| |\nabla u(s)|^2 \|_{L^{\frac{p}{2}}} (t-s)^{-(\frac12 +\frac{d}{p})} \, ds\\
\leq& C \int_0^t \| \nabla u(s) \|^2_{L^p} (t-s)^{-(\frac12 +\frac{d}{p})} \, ds\\
\leq& C  \|u_0\|^2_{\dot{B}^{\frac{d}{p}}_{p,\infty}} \int_0^t s^{\frac{d}{p}-1} (t-s)^{-(\frac12 +\frac{d}{p})} \, ds\\
\leq& C t^{-\frac12}  \|u_0\|^2_{\dot{B}^{\frac{d}{p}}_{p,\infty}},
\end{align*}
where we used \eqref{heat2} in the last estimate.

Similarly, we obtain
 \begin{align*}
\|u(t)-G(t)u_0\|_{L^\infty} \leq& C\int_0^t \| |\nabla u(s)|^2 \|_{L^{\frac{p}{2}}} (t-s)^{-\frac{d}{p}} \, ds\\
\leq& C \int_0^t \| \nabla u(s) \|^2_{L^p} (t-s)^{-\frac{d}{p}} \, ds\\
\leq& C  \|u_0\|^2_{\dot{B}^{\frac{d}{p}}_{p,\infty}} \int_0^t s^{\frac{d}{p}-1} (t-s)^{-\frac{d}{p}} \, ds\\
\leq& C \|u_0\|^2_{\dot{B}^{\frac{d}{p}}_{p,\infty}}.
\end{align*}
It remains to establish the bound 
\[
\sup_{x\in \R^d} \sup_{0<R<\infty} R^{-\frac{d}{2}}\|\nabla (u-G(\cdot)u_0)\|_{L^2(B_R(x)\times(0,R^2))} \leq C \|u_0\|_{\dot{B}^{\frac{d}{p}}_{p,\infty}}.
\]
Using H\"older's inequality, we estimate for every $x\in \R^d$ and some $p>d$
\begin{align*}
\int_{B_R(x)} |\nabla (u(t)-G(t)u_0)|^2 \leq& C(\int_{B_R(x)} |\nabla (u(t)-G(t)u_0)|^p)^{\frac{2}{p}} R^{d(1-\frac{2}{p})}\\
\leq& Ct^{\frac{d}{p}-1} R^{d(1-\frac{2}{p})}  \|u_0\|^2_{\dot{B}^{\frac{d}{p}}_{p,\infty}}.
\end{align*}
From this it is easy to see that
\[
\sup_{x\in \R^d} \sup_{0<R<\infty} R^{-\frac{d}{2}}\|\nabla (u-G(\cdot)u_0)\|_{L^2(B_R(x)\times(0,R^2))} \leq C \|u_0\|_{\dot{B}^{\frac{d}{p}}_{p,\infty}}.
\]
\end{proof}
Finally, we show a new stability result on exponentially long time scales for the harmonic map flow starting near a constant map. Note that in this result we do not assume that the initial values decay in space.
\begin{theorem} \label{thsec3b}
	For  every $C > 0$ there exist $b>0$ and $\varepsilon_0 > 0$ such that for every $\varepsilon\in (0,\varepsilon_0)$ and every
	solution $u:\R^d\times [0,\infty)\to N$ of \eqref{start1} with  $\|u_0-P\|_{W^{1,\infty}} \leq \varepsilon$, where $P\in N$ is an arbitrary point, we have
	 $$ \sup_{t \in [0,\exp\left(\frac{b}{\varepsilon}\right)-1]} \|u(t)-P\|_{C^0}+\sup_{t \in [0,\exp\left(\frac{b}{\varepsilon}\right)-1]} (1{+}t)^{\frac12} \|\nabla u(t)\|_{C^0} \le C .	$$
\end{theorem}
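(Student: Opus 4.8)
The plan is to pass to the deviation $v=u-P$, which, since $P$ is constant, satisfies \eqref{veq} with $u^*=P$ and has the same gradient $\nabla v=\nabla u$; I will work with the mild (Duhamel) formulation
\[
v(t)=G(t)v_0+\int_0^t G(t-s)\,A(P+v)(\nabla v,\nabla v)(s)\,ds .
\]
Because we are at the borderline exponent $p=\infty$, the natural quantity to bootstrap is
\[
M(T)=\sup_{0\le t\le T}\Big(\|v(t)\|_{L^\infty}+(1+t)^{1/2}\|\nabla v(t)\|_{L^\infty}\Big),
\]
for which the hypothesis $\|u_0-P\|_{W^{1,\infty}}\le\varepsilon$ gives $M(0)\le 2\varepsilon$. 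As long as $\|v(t)\|_{L^\infty}$ stays small, $u$ remains in a neighbourhood of $P$ on which the smooth form $A$ is bounded, so the nonlinearity obeys $\|A(P+v)(\nabla v,\nabla v)(s)\|_{L^\infty}\le C_A\|\nabla v(s)\|_{L^\infty}^2\le C_A M(T)^2(1+s)^{-1}$, i.e. it carries exactly the borderline temporal decay $s^{-1}$.

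Next I would estimate the two parts of $M$ using Lemma \ref{kernel} with $r=q=\infty$, so that $\|G(t)\|_{\mathcal{L}(L^\infty,L^\infty)}\le C$ and $\|\nabla G(t)\|_{\mathcal{L}(L^\infty,L^\infty)}\le Ct^{-1/2}$. The linear terms are harmless: $\|G(t)v_0\|_{L^\infty}\le\varepsilon$, and combining $\nabla(G(t)v_0)=G(t)\nabla v_0$ (good for small $t$) with the $t^{-1/2}$ smoothing (good for large $t$) yields $(1+t)^{1/2}\|\nabla(G(t)v_0)\|_{L^\infty}\le C\varepsilon$. For the Duhamel contributions the $L^\infty$-part produces $\int_0^t(1+s)^{-1}\,ds=\log(1+t)$, while the gradient part rests on the borderline convolution bound
\[
(1+t)^{1/2}\int_0^t(t-s)^{-1/2}(1+s)^{-1}\,ds\le C\big(1+\log(1+t)\big),
\]
which I would prove by splitting the integral at $s=t/2$. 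Putting these together gives the single closed inequality
\[
M(T)\le C_0\varepsilon+C_0\,M(T)^2\big(1+\log(1+T)\big).
\]

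Finally I would run a continuity argument: let $T^\ast=\sup\{T:\ M(T)\le 2C_0\varepsilon\}$, which is positive since $M(0)$ lies strictly below the threshold. On $[0,T^\ast]$ the displayed inequality forces $M(T)\le C_0\varepsilon+4C_0^3\varepsilon^2\big(1+\log(1+T)\big)$, so if $b$ is chosen so that $1+\log(1+T)\le \tfrac{1}{8C_0^2\varepsilon}$ for all $T\le\exp(b/\varepsilon)-1$ (e.g. $b=\tfrac{1}{16C_0^2}$ and $\varepsilon$ small), then $M(T)\le\tfrac32 C_0\varepsilon<2C_0\varepsilon$; this contradicts $M(T^\ast)=2C_0\varepsilon$ unless $T^\ast\ge\exp(b/\varepsilon)-1$, giving $M\le 2C_0\varepsilon$ on the whole interval. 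Choosing $\varepsilon_0$ so small that in addition $2C_0\varepsilon_0<\tfrac{C}{2}$ and that $2C_0\varepsilon_0$ keeps $u$ in the neighbourhood where $A$ is controlled then turns the bound into $\le C$, as claimed.

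The heart of the matter, and the step I expect to be the main obstacle, is precisely the borderline $p=\infty$ behaviour: in contrast to $p<\infty$, where the nonlinear source is integrable in time and global decay follows (as in Theorem \ref{newsoy}), here the relevant time integral diverges only logarithmically. The quadratic nonlinearity supplies a factor $\varepsilon^2$, but it is multiplied by $\log(1+T)$, so the loss $\varepsilon^2\log(1+T)$ stays below the linear scale $\varepsilon$ exactly until $\log(1+T)\sim 1/\varepsilon$; this is the structural reason one obtains the exponentially long time $\exp(b/\varepsilon)$ rather than a global bound. A secondary technical point is the continuity of $T\mapsto M(T)$ and the validity of the Duhamel representation, which I would justify by working with the smooth solution provided by \cite{KochLamm} for data of small oscillation (guaranteed by the $W^{1,\infty}$-smallness) and by verifying that $\|v\|_{L^\infty}\le 2C_0\varepsilon$ keeps $u$ in the range where $A$ is bounded throughout the interval.
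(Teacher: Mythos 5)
Your proposal is correct and follows essentially the same route as the paper: the same Duhamel representation for $u-P$, the same bootstrap quantity (your $M(T)$ is the paper's $a(T)+b(T)$), the same linear heat-semigroup bounds and the same borderline convolution estimate producing the factor $1+\log(1+t)$, leading to the identical closed quadratic inequality and the exponentially long time scale. The only cosmetic difference is that the paper rescales $a,b$ by $\varepsilon^{-1}$ before closing the argument, whereas you run the continuity argument on the unrescaled quantity; the mechanism is the same.
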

\begin{proof}
 We define the functions
\begin{eqnarray*}
		a(t) & = & \sup_{0\leq s\leq t} \|u(s)-P\|_{L^\infty} \ \ \ \text{resp.} \\
		b(t) & = & \sup_{0\leq s\leq t} \|(1{+}s)^{\frac12} \nabla  u(s)\|_{L^\infty},\\
		\end{eqnarray*} 
and we estimate (using the fact that $\|G(t)(u_0-P)\|_{L^\infty}\leq \|u_0-P\|_{L^\infty}$)
\begin{eqnarray*}
	\|u(t)-P\|_{L^\infty} & \leq& C\|u_0-P\|_{L^\infty}+C\biggl\|\int^t_0 G(\cdot,t-s) \star A(u)(\nabla u,\nabla u) (s)d s\biggr\|_{L^{\infty}}\\
&	\leq& C a(0)+C  \int_0^t \|   |\nabla u(s)|^2\|_{L^\infty} 
	d s\\
&	\leq&  Ca(0)+C b^2(t) \int^t_0 (1{+}s)^{-1} \,d s\\
&	\leq&  Ca(0)+ C b^2(t)  \log(1+t)
	\end{eqnarray*}
	and
	\begin{align*}
	(1+t)^{\frac12} \|\nabla u(t)\|_{L^\infty} \leq& C(a(0)+b(0))+Cb^2(t)(1+t)^{\frac12} \int_0^t (t-s)^{-\frac12} (1+s)^{-1} \, ds\\
	\leq&  C(a(0)+b(0)) + C b^2(t) (1+\log(1+t)),
	\end{align*}
	where we used that
		\[
\|\nabla (G(\cdot,t) \star (u_0-P))\|_{L^\infty} \leq  \begin{cases} C\|\nabla u_0\|_{L^\infty} , & \text{for } \, t\leq 1, \\ Ct^{-\frac{1}{2}} \| u_0-P\|_{L^\infty}, & \text{for } \, t>1. \end{cases}
\] 
Next we define the rescaled functions $\tilde{a}(t)=\epsilon^{-1} a(t)$ resp. $\tilde{b}(t) =\epsilon^{-1} b(t)$ and we conclude
	\[
	\tilde{a}(t)+\tilde{b}(t) \leq C_1 (\tilde{a}(0) +\tilde{b}(0)) + C_2 \epsilon (1+\log(1+t)) \tilde{b}^2(t)
	\]
	for some constants $C_1,C_2>0$. Since $\tilde{a}(0) +\tilde{b}(0)\leq 1$ we conclude
	\[
	\tilde{a}(t)+\tilde{b}(t) \leq 2C_1 
	\]
	as long as
	\[
	1+t\leq \exp{\left(\frac{1}{4C_1C_2 \epsilon}-1\right)}.
	\]
	Hence 
	\[
	a(t)+b(t) \leq 2C_1 \epsilon
	\]
	for all $0\leq t \leq \exp{\left(\frac{1}{4C_1C_2 \epsilon}-1\right)}-1$ where we note that for $\epsilon> 0 $ sufficiently small, the last number satisfies the required properties in the statement of the Theorem.
 \end{proof}

\section{Related equations}
In this section we want to show that the results obtained for the harmonic map heat flow can be extended to two other parabolic equations to either simplify proofs of existing results or yield new results. We note that in the case of the Navier-Stokes equation similar results have been obtained earlier by Cannone \cite{can}, Planchon \cite{planch} and Koch-Tataru \cite{KT}. 

\subsection{Equations with power-type nonlinearities}
Here we study the very classical semilinear equation
\begin{align}
\partial_t u-\Delta u = f(u) \label{semilinear}
\end{align}
where $u:\R^d\times [0,\infty) \to \R$ with $u(\cdot,0)=u_0$ and $f:\R\to \R$ is a smooth function satisfying $|f(s)|\leq |s|^q$ for some $q>1$ and all $s\in \R$. It was already shown in  \cite{Fujita,Wei81} that for non-negative initial data $u_0$ so that the norm $\|u_0\|_{L^{\frac{d(q-1)}{2}}}$ is sufficiently small (here we also assume $\frac{d(q-1)}{2}>1$), there exists a non-negative global solution of \eqref{semilinear} satisfying
\[
\sup_{t>0}t^{\frac{1}{q-1}-\frac{d}{2p}}\|u(t)\|_{L^p} \leq C \|u_0\|_{L^{\frac{d(q-1)}{2}}}
\]
for every $\frac{d(q-1)}{2}< p < \frac{dq(q-1)}{2}$. This result should be directly compared (and was in fact a motivation) to the one of Soyeur \cite{Soyeur} (see Theorem \ref{soy}) for the harmonic map heat flow. In fact, with the same technique that we used in the previous section, we are able to extend these results and we obtain
\begin{theorem}\label{newweissler}
There exists a constant $\varepsilon>0$ depending only on $d$ and $q$ such that if $u_0\in B^{\frac{d}{p}-\frac{2}{q-1}}_{p,\infty} (\R^d)$ with $1<q<\infty$, $1< \frac{d(q-1)}{2}$ and if $\|u_0\|_{B^{\frac{d}{p}-\frac{2}{q-1}}_{p,\infty}} <\varepsilon$, then we have the estimate
\[
\sup_{t>0} t^{\frac1{q-1}-\frac{d}{2p}}\|u(t)\|_{L^p} \leq C \|u_0\|_{B^{\frac{d}{p}-\frac{2}{q-1}}_{p,\infty}},
\]
where $C=C(d,p)$, for every $\frac{d(q-1)}{2}< p < \frac{dq(q-1)}{2}$ and every solution $u:(0,\infty)\times \R^d\to \R$ of \eqref{semilinear} with
\[
t^{\frac1{q-1}-\frac{d}{2p}} u(t,x) \in C^0((0,\infty),L^p(\R^d))
\]
and
\[
\lim_{t\searrow 0} t^{\frac1{q-1}-\frac{d}{2p}}\|u(t)\|_{L^p} =0.
\]
If we know additionally that $q>\max \{2,1+\frac{2}{d}\}$ then we also get the estimate
\[
\sup_{t>0} t^{\frac1{q-1}}\|u(t)\|_{L^\infty} \leq C \|u_0\|_{B^{\frac{d}{p}-\frac{2}{q-1}}_{p,\infty}}.
\]
\end{theorem}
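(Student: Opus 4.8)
The plan is to mirror the proof of Theorem~\ref{newsoy} almost verbatim, replacing the nonlinearity $A(u)(\nabla u,\nabla u)$ by $f(u)$ and the temporal weight $\frac12-\frac{d}{2p}$ by $\alpha:=\frac{1}{q-1}-\frac{d}{2p}$. The starting point is the heat-semigroup characterization of the Besov norm, in complete analogy with the definition of $\dot B^{d/p}_{p,\infty}$ used above: the smoothness index $s=\frac{d}{p}-\frac{2}{q-1}$ is negative (this negativity is exactly the constraint $p>\frac{d(q-1)}{2}$) and $-s/2=\alpha$, so that
\[
\|v\|_{B^{\frac{d}{p}-\frac{2}{q-1}}_{p,\infty}}\simeq \sup_{t>0}t^{\alpha}\|G(t)v\|_{L^p},
\]
and in particular $t^{\alpha}\|G(t)u_0\|_{L^p}\leq C\|u_0\|_{B^{\frac{d}{p}-\frac{2}{q-1}}_{p,\infty}}$. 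I then write Duhamel's formula $u(t)-G(t)u_0=\int_0^t G(t-s)\star f(u(s))\,ds$ and introduce $m(T):=\sup_{0\leq t\leq T}t^{\alpha}\|u(t)\|_{L^p}$. The two hypotheses $t^{\alpha}u(t)\in C^0$ and $\lim_{t\searrow 0}t^{\alpha}\|u(t)\|_{L^p}=0$ are precisely what guarantees that $m$ is continuous with $m(0)=0$, which legitimizes the closing bootstrap.

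For the $L^p$ bound I apply \eqref{heat1} with $\sigma=0$ to the operator $G(t-s)\colon L^{p/q}\to L^{p}$; using $|f(u)|\leq|u|^q$ and $\||u(s)|^q\|_{L^{p/q}}=\|u(s)\|_{L^p}^q$ gives
\[
\|u(t)-G(t)u_0\|_{L^p}\leq C\int_0^t (t-s)^{-\frac{d(q-1)}{2p}}\|u(s)\|_{L^p}^q\,ds\leq Cm(T)^q\int_0^t (t-s)^{-\frac{d(q-1)}{2p}}s^{-q\alpha}\,ds.
\]
Evaluating the temporal integral with \eqref{heat2} reproduces exactly the factor $t^{-\alpha}$, and — just as in the line ``it is exactly here that we have to use $d<p<\infty$'' in Theorem~\ref{newsoy} — the admissibility conditions $0<\frac{d(q-1)}{2p}<1$ and $0<q\alpha<1$ turn out to be precisely $\frac{d(q-1)}{2}<p<\frac{dq(q-1)}{2}$, which is where the full range of $p$ is consumed. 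Multiplying by $t^{\alpha}$ and taking the supremum yields the superlinear inequality $m(T)\leq C\|u_0\|_{B^{\frac{d}{p}-\frac{2}{q-1}}_{p,\infty}}+Cm(T)^q$. Since $q>1$, $m(0)=0$, and $m$ is continuous, for $\varepsilon$ small enough a continuity argument (the set $\{x\geq0:\,Cx^q-x+C\varepsilon\geq0\}$ splits into a small interval near $0$ and a set bounded away from it, and $m$ cannot jump the gap) forces $m(T)\leq C\|u_0\|_{B^{\frac{d}{p}-\frac{2}{q-1}}_{p,\infty}}$ for all $T$, which is the first claim.

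For the $L^\infty$ estimate under $q>\max\{2,1+\frac{2}{d}\}$ I no longer bootstrap but simply feed the $L^p$ decay $\|u(s)\|_{L^p}\leq C\|u_0\|_{B^{\frac{d}{p}-\frac{2}{q-1}}_{p,\infty}}s^{-\alpha}$ back into Duhamel. For the linear part I use the Besov embedding $B^{\frac{d}{p}-\frac{2}{q-1}}_{p,\infty}\hookrightarrow B^{-\frac{2}{q-1}}_{\infty,\infty}$ (same scaling $s-\frac{d}{p}=-\frac{2}{q-1}$, with $p\leq\infty$), giving $t^{\frac{1}{q-1}}\|G(t)u_0\|_{L^\infty}\leq C\|u_0\|_{B^{\frac{d}{p}-\frac{2}{q-1}}_{p,\infty}}$; for the nonlinear part I apply \eqref{heat1} with $\sigma=0$ to $G(t-s)\colon L^{p/q}\to L^\infty$, producing the kernel $(t-s)^{-\frac{dq}{2p}}$, and \eqref{heat2} again collapses the exponents to the weight $t^{-\frac{1}{q-1}}$. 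The new admissibility requirement is $\frac{dq}{2p}<1$, i.e.\ $p>\frac{dq}{2}$, which is compatible with the upper bound $p<\frac{dq(q-1)}{2}$ exactly when $q>2$; the remaining condition $q>1+\frac{2}{d}$ is just the standing hypothesis $\frac{d(q-1)}{2}>1$. Together these are $q>\max\{2,1+\frac{2}{d}\}$, and combining the linear and nonlinear pieces gives $t^{\frac{1}{q-1}}\|u(t)\|_{L^\infty}\leq C\|u_0\|_{B^{\frac{d}{p}-\frac{2}{q-1}}_{p,\infty}}+C\|u_0\|^q_{B^{\frac{d}{p}-\frac{2}{q-1}}_{p,\infty}}$, where smallness absorbs the last term.

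The genuinely delicate point is the bookkeeping of scaling exponents: one must verify that both temporal integrals converge at each endpoint and that the exponents collapse to reproduce the self-similar weights $t^{-\alpha}$ and $t^{-\frac{1}{q-1}}$, and that the resulting constraints on $p$ (and, for the $L^\infty$ statement, on $q$) coincide \emph{exactly} with the stated hypotheses rather than merely being sufficient for them. Everything else is routine once the heat characterization of $\|\cdot\|_{B^{\frac{d}{p}-\frac{2}{q-1}}_{p,\infty}}$ and the embedding into $B^{-\frac{2}{q-1}}_{\infty,\infty}$ are in place.
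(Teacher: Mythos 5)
Your proposal is correct and follows essentially the same route as the paper: Duhamel's formula, the heat-kernel bound \eqref{heat1} with $\sigma=0$ from $L^{p/q}$ to $L^p$ (resp.\ $L^\infty$), the convolution identity \eqref{heat2} with exactly the exponent constraints $\frac{d(q-1)}{2}<p<\frac{dq(q-1)}{2}$ (resp.\ $p>\frac{dq}{2}$, forcing $q>2$), and the continuity bootstrap on $m(T)$ starting from $m(0)=0$. The only (welcome) difference is that you make explicit the treatment of the linear term $G(t)u_0$ in the $L^\infty$ bound via the embedding into $B^{-2/(q-1)}_{\infty,\infty}$, which the paper leaves implicit.
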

\begin{proof}
Arguing as in the proof of Theorem \ref{newsoy}, we obtain
\begin{align*}
    \|u(t)-G(t)u_0\|_{L^p}\leq& C\int_0^t \||u(s)|^q\|_{L^{\frac{p}{q}}} (t-s)^{-\frac{d(q-1)}{2p}}\, ds\\
    \leq&  C\int_0^t \|u(s)\|_{L^p}^q(t-s)^{-\frac{d(q-1)}{2p}}\, ds\\
    \leq& Cm(T)^q\int_0^t s^{\frac{dq}{2p}-\frac{q}{q-1}}  (t-s)^{-\frac{d(q-1)}{2p}}\, ds\\
    \leq& Cm(T)^q,
\end{align*}
since $0>\frac{dq}{2p}-\frac{q}{q-1} >-1 $ and $ 0 > -\frac{d(q-1)}{2p}>-1 $. Note that here we let
\[
m(T)=\sup_{0\leq t\leq T} t^{\frac1{q-1}-\frac{d}{2p}}\|u(t)\|_{L^p}.
\]
As above, this estimate implies the desired claim since
\[
\sup_{t>0} t^{\frac1{q-1}-\frac{d}{2p}} \|G(t)u_0\|_{L^p} \cong \|u_0\|_{B^{\frac{d}{p}-\frac{2}{q-1}}_{p,\infty}} .
\]

Next, if $q>2$ there exists a $p$ satisfying $\frac{dq}{2}<p<\frac{dq(q-1)}{2}$ and for one such value of $p$ we note that for every $0<t<\infty$
\begin{align*}
  \|u(t)-G(t)u_0\|_{L^\infty}\leq& C\int_0^t \||u(s)|^q\|_{L^{\frac{p}{q}}} (t-s)^{-\frac{dq}{2p}}\, ds\\  
  \leq&  C\int_0^t \|u(s)\|_{L^p}^q(t-s)^{-\frac{dq}{2p}}\, ds\\
    \leq& Cm(T)^q\int_0^t s^{\frac{dq}{2p}-\frac{q}{q-1}}  (t-s)^{-\frac{dq}{2p}}\, ds\\
    \leq& Ct^{-\frac{1}{q-1}}m(T)^q,
\end{align*}
and hence we also get the estimate
\[
\sup_{t>0} t^{\frac1{q-1}}\|u(t)\|_{L^\infty} \leq C \|u_0\|_{B^{\frac{d}{p}-\frac{2}{q-1}}_{p,\infty}}.
\]
\end{proof}
Note that the $L^p$-estimate in the previous theorem has been obtained earlier by Miao, Yuan and Zhang \cite{MYZ}. Related estimates have also been obtained by Blatt and Struwe \cite{BS} for small initial data in Morrey spaces.

\subsection{Biharmonic map flow}

As another example 	we consider solutions $ u : \mathbb{R}^d \times (0,T) \to S^{m-1}\subset \R^m $ of
the (extrinsic) biharmonic map heat flow governed by
\begin{align}\label{start2}
(\partial_t+\Delta^2) u &= u(|\Delta u|^2-\Delta |\nabla u|^2-2 \text{div} \langle \Delta u, \nabla u\rangle)\nonumber \\
&= u|\Delta u|^2+ \nabla u \cdot (\nabla |\nabla u|^2+2\langle \Delta u,\nabla u\rangle ) -\text{div} (u \nabla |\nabla u|^2+2u\langle \Delta u, \nabla u\rangle ) \\
&=: f_1[u]+ \text{div} f_2[u], \nonumber
\end{align}
where we can estimate
\begin{align*}
|f_1[u]| &\leq C(|\nabla^2 u|^2 + |\nabla u|^4) \ \ \ \text{and} \\
|f_2[u]| &\leq C|\nabla u| |\nabla^2 u|.
\end{align*}
For the sake of simplicity we restrict ourselves to sphere-valued maps. The arguments below can easily be extended to general closed manifolds $N$ and maps $u : \mathbb{R}^d \times (0,T) \to N$. Additionally, there exists another version of the biharmonic map flow, the so called intrinsic biharmonic map. All of the results shown below carry over directly to this fourth order parabolic PDE.

Again we are interested in an unconditional uniqueness resp. stability result for small initial data in a suitable Besov space.
As before, the principle of linear stability
cannot be applied directly due to the fact that the linearization again possesses an essential spectrum up to the imaginary axis. 

But as before some diffusive behavior can be used to 
control the nonlinear terms which are irrelevant w.r.t. this diffusive behavior. In the following we let $b:\R^d\times (0,\infty) \to \R$  denote the biharmonic heat kernel. It follows from the results of \cite{KochLamm} that Lemma \ref{kernel} can be extended to this case, namely we have
\begin{lemma}\label{bihkernel}
Let $b: \R^d\times (0,\infty)\to \R$ be the biharmonic heat kernel. Using the notation $b(t)f=b_t \star f$ we have the estimate
\begin{align}
\| \nabla^\sigma b (t)  \|_{\mathcal{L}(L^r,L^q)} \leq C t^{-\frac{d}{4}(\frac{1}{r}-\frac{1}{q})-\frac{\sigma}{4}} , \label{heatbih}
\end{align}
where $1\leq r\leq q\leq \infty$, $\sigma\in \N_0$ and $C<\infty$ is a constant.
\end{lemma}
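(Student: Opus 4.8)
The plan is to mimic the proof of Lemma \ref{kernel} essentially verbatim, replacing the Gaussian scaling of the heat semigroup by the quartic scaling of the biharmonic semigroup and supplying the integrability of the rescaled kernel from known pointwise bounds. First I would record the self-similar structure of $b$. On the Fourier side the biharmonic heat semigroup acts as multiplication by $e^{-t|\xi|^4}$, so $\hat{b}(\xi,t)=e^{-t|\xi|^4}$, and the substitution $\xi\mapsto t^{-1/4}\xi$ in the inverse Fourier integral yields the self-similarity
\[
b(x,t)=t^{-d/4}\,\Phi\bigl(t^{-1/4}x\bigr),\qquad \Phi:=b(\cdot,1).
\]
Differentiating $\sigma$ times gives $\nabla^\sigma b(x,t)=t^{-(d+\sigma)/4}(\nabla^\sigma\Phi)(t^{-1/4}x)$, and the change of variables $y=t^{-1/4}x$ shows that for every $s\in[1,\infty]$
\[
\|\nabla^\sigma b(\cdot,t)\|_{L^s}=t^{-\frac{\sigma}{4}-\frac{d}{4}\left(1-\frac1s\right)}\,\|\nabla^\sigma\Phi\|_{L^s}.
\]

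Next I would invoke Young's convolution inequality, exactly as in Lemma \ref{kernel}. Given $1\le r\le q\le\infty$, define $s\in[1,\infty]$ by $\tfrac1s=1-\tfrac1r+\tfrac1q$; the ordering $r\le q$ guarantees $s\in[1,\infty]$, and one has $1-\tfrac1s=\tfrac1r-\tfrac1q$, so that the exponent above becomes precisely $-\tfrac{\sigma}4-\tfrac d4\bigl(\tfrac1r-\tfrac1q\bigr)$. Applying $\|f\star g\|_{L^q}\le\|f\|_{L^s}\|g\|_{L^r}$ with $f=\nabla^\sigma b(\cdot,t)$ and $g$ the function on which the operator acts then delivers \eqref{heatbih}, provided $\|\nabla^\sigma\Phi\|_{L^s}<\infty$ for the relevant $s$.

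The main obstacle, and the only nontrivial input, is therefore the finiteness of $\|\nabla^\sigma\Phi\|_{L^s}$ for all $s\in[1,\infty]$, equivalently $\nabla^\sigma\Phi\in L^1\cap L^\infty$. In contrast to the Gaussian case, $\Phi$ is neither positive nor Gaussian in its decay, so the bound cannot be read off from an explicit closed form. Instead one establishes pointwise estimates of the form $|\nabla^\sigma\Phi(x)|\le C\exp(-c|x|^{4/3})$ through a steepest-descent/stationary-phase analysis of the oscillatory integral $\int_{\R^d}e^{ix\cdot\xi-|\xi|^4}(i\xi)^\sigma\,d\xi$, the exponent $4/3$ being the one conjugate to $4$; this simultaneously yields boundedness and integrability, hence $\nabla^\sigma\Phi\in L^s$ for every $s\in[1,\infty]$. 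Since such kernel and derivative bounds for the biharmonic heat kernel are already available in \cite{KochLamm}, I would simply cite them rather than rederive the saddle-point analysis, which closes the argument.
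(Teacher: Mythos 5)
Your argument is correct and matches the paper's treatment: the paper gives no proof of Lemma \ref{bihkernel} beyond asserting that it follows from the results of \cite{KochLamm}, and your scaling-plus-Young reduction, with the pointwise bound $|\nabla^\sigma\Phi(x)|\le C\exp(-c|x|^{4/3})$ cited from \cite{KochLamm} as the only nontrivial input, is exactly the standard way to make that assertion precise.
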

Before we now state our main results for the biharmonic map flow, we note that we have another equivalent characterization of the homogeneous Besov space via the biharmonic heat kernel given by
\[
\|u_0\|_{\dot{B}^{\frac{d}{p}}_{p,\infty}}:=\sup_{t>0} \left( t^{\frac14 -\frac{d}{4p}} \|\nabla (b(t)u_0)\|_{L^p}+ t^{\frac12 -\frac{d}{2p}} \|\nabla^2 (b(t)u_0)\|_{L^{\frac{p}{2}}}\right).
\]
Here we note that the first term on the right hand-side indeed defines an equivalent norm on the homogeneous Besov space $\dot{B}^{\frac{d}{p}}_{p,\infty}$, whereas the second term defines an equivalent norm on the homogeneous space $\dot{B}^{\frac{2d}{p}}_{\frac{p}{2},\infty}$. Since the second homogeneous Besov space is embedded in the first one we choose our norm as a definition of the larger space.

\begin{theorem}\label{newsoybih}
There exists a constant $\varepsilon>0$ depending only on $d$ and $n$ such that if $u_0\in \dot{B}^{\frac{d}{p}}_{p,\infty}(\R^d,S^{m-1})$ with $d<p<\infty$ and $\|u_0\|_{\dot{B}^{\frac{d}{p}}_{p,\infty}} <\varepsilon$, then we have the estimate
\[
t^{\frac14-\frac{d}{4p}}\|\nabla u(t)\|_{L^p} +t^{\frac12-\frac{d}{2p}}\|\nabla^2 u(t)\|_{L^{\frac{p}{2}}} \leq C\| u_0\|_{\dot{B}^{\frac{d}{p}}_{p,\infty}},
\]
where $C=C(d,m,p)$, for every solution $u:(0,\infty)\times \R^d\to S^{m-1}$ of \eqref{start1} with 
\[
t^{\frac14-\frac{d}{4p}}\nabla u \in C^0([0,\infty), L^{p}(\R^d)), \, \, t^{\frac12-\frac{d}{2p}}\nabla^2 u \in C^0([0,\infty), L^{{\frac{p}{2}}}(\R^d))
\]
and 
\[
\lim_{t\searrow 0} (t^{\frac14-\frac{d}{4p}} \|\nabla u(t)\|_{L^p} +t^{\frac12-\frac{d}{2p}}\|\nabla^2 u(t)\|_{L^{\frac{p}{2}}})=0.
\]
\end{theorem}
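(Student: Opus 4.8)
The plan is to mimic the proof of Theorem~\ref{newsoy} verbatim, replacing the heat kernel estimates of Lemma~\ref{kernel} by the biharmonic kernel estimates of Lemma~\ref{bihkernel} and tracking the two coupled quantities dictated by the norm. Writing Duhamel's formula for \eqref{start2},
\[
u(t)-b(t)u_0=\int_0^t b(t-s)\star\bigl(f_1[u]+\mathrm{div}\,f_2[u]\bigr)\,ds,
\]
I set
\[
m(T)=\sup_{0\le t\le T}\Bigl(t^{\frac14-\frac{d}{4p}}\|\nabla u(t)\|_{L^p}+t^{\frac12-\frac{d}{2p}}\|\nabla^2 u(t)\|_{L^{p/2}}\Bigr),
\]
which satisfies $m(0)=0$ by the assumed vanishing of the scaled norm as $t\searrow0$. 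The equivalent characterization of the Besov norm through $b$ ensures that the linear part $b(t)u_0$ contributes exactly $\|u_0\|_{\dot{B}^{\frac{d}{p}}_{p,\infty}}$ to both summands.

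For the $\nabla u$-estimate I apply $\nabla b(t-s)$ to $f_1$ and, integrating by parts, $\nabla^2 b(t-s)$ to $f_2$. Using $\||\nabla^2 u|^2\|_{L^{p/4}}=\|\nabla^2 u\|_{L^{p/2}}^2$, $\||\nabla u|^4\|_{L^{p/4}}=\|\nabla u\|_{L^p}^4$ and $\||\nabla u||\nabla^2 u|\|_{L^{p/3}}\le\|\nabla u\|_{L^p}\|\nabla^2 u\|_{L^{p/2}}$ together with \eqref{heatbih}, every resulting time integral is of the form $\int_0^t(t-s)^{-\alpha}s^{-\beta}\,ds$ with $\alpha,\beta\in(0,1)$ precisely because $d<p$; applying \eqref{heat2}, each integrand reproduces the homogeneity $t^{-(\frac14-\frac{d}{4p})}$, so that after multiplication by $t^{\frac14-\frac{d}{4p}}$ the three terms contribute $C(m(T)^2+m(T)^4)$. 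For the $\nabla^2 u$-estimate in $L^{p/2}$ I instead apply $\nabla^2 b(t-s)$ to $f_1$ and $\nabla^3 b(t-s)$ to $f_2$; the same bookkeeping, now with target exponent $q=p/2$ in \eqref{heatbih}, reproduces the homogeneity $t^{-(\frac12-\frac{d}{2p})}$ and again yields a contribution $C(m(T)^2+m(T)^4)$.

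Adding the two estimates produces the scalar inequality
\[
m(T)\le C\|u_0\|_{\dot{B}^{\frac{d}{p}}_{p,\infty}}+C\bigl(m(T)^2+m(T)^4\bigr),
\]
valid for every $T<\infty$ with a constant independent of $T$. Since $T\mapsto m(T)$ is continuous and $m(0)=0$, a standard bootstrap argument shows that for $\varepsilon>0$ small enough the right-hand side keeps $m(T)$ trapped below the smallest positive root of the polynomial, giving $m(T)\le C\|u_0\|_{\dot{B}^{\frac{d}{p}}_{p,\infty}}$ uniformly in $T$ and hence the claimed estimate.

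The computations are entirely routine once the exponents are matched; the only point requiring genuine care is organizing the four nonlinear contributions (the terms $|\nabla^2u|^2$ and $|\nabla u|^4$ coming from $f_1$, and the divergence term $|\nabla u||\nabla^2u|$ entering both estimates) so that each Beta-function exponent $\alpha,\beta$ lands in $(0,1)$. I expect this verification — in particular that the borderline cases are governed exactly by the hypothesis $d<p$ — to be the main, though mild, obstacle, together with the presence of the quartic term $m(T)^4$, which forces the closing inequality to be genuinely polynomial and hence demands the continuity-based bootstrap in place of the elementary quadratic-formula argument available in Theorem~\ref{newsoy}.
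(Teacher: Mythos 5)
Your proposal is correct and follows essentially the same route as the paper's proof: the same Duhamel splitting into the $f_1$ and $\operatorname{div} f_2$ contributions, the same choices of how many derivatives of the biharmonic kernel to place on each piece, the same H\"older estimates and Beta-function integrals via \eqref{heatbih} and \eqref{heat2}, and the same closing inequality $m(T)\le C\|u_0\|_{\dot{B}^{d/p}_{p,\infty}}+Cm(T)^2(1+m(T)^2)$ resolved by continuity and $m(0)=0$. The only cosmetic difference is that the paper phrases the final step identically for the quadratic and quartic cases, so no genuinely new bootstrap is needed beyond what Theorem~\ref{newsoy} already uses.
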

\begin{proof}
We follow closely the proof of Theorem \ref{newsoy} by observing that
\begin{align*}
\nabla^i (u(x,t)-b(t) \star u_0 (x)) =& \int_0^t \int_{\R^d} \nabla^i b(x-y,t-s) f_1[u]\, dy\, ds\\
&- \int_0^t \int_{\R^d} \nabla^{i+1} b(x-y,t-s) f_2[u](s)\, dy\, ds=:I_i+II_i.
\end{align*}
Using the estimate \eqref{heatbih} with $\sigma=1$, $r=\frac{p}{4}$ and $q=p$ we get for every $ t>0$
\begin{align*}
\|I_1\|_{L^p} \leq& C\int_0^t \left( \| |\nabla u(s)|^4 \|_{L^{\frac{p}{4}}}+\| |\nabla^2 u(s)|^2\|_{L^{\frac{p}{4}}} \right) (t-s)^{-(\frac14 +\frac{3d}{4p})}\, ds\\
\leq& C\int_0^t (\| \nabla u(s) \|^4_{L^p}+\|\nabla^2 u(s)\|^2_{L^{\frac{p}{2}}}) (t-s)^{-(\frac14 +\frac{3d}{4p})} \, ds.
\end{align*}
Setting 
\[
m(T) =\sup_{0\leq t \leq T} \left(t^{\frac14 -\frac{d}{4p}} \|\nabla u(t)\|_{L^p}+ t^{\frac12 -\frac{d}{2p}} \|\nabla^2 u(t)\|_{L^{\frac{p}{2}}}\right),
\]
the last inequality implies for all $t>0$
\begin{align*}
 t^{\frac14 -\frac{d}{4p}} \|I\|_{L^p}\leq& Ct^{\frac14 -\frac{d}{4p}} m(T)^2(1+m(T)^2) \int_0^t (t-s)^{-(\frac14 +\frac{3d}{4p})} s^{-1+\frac{d}{p}} \, ds\\
 \leq& Cm(T)^2(1+m(T)^2),
 \end{align*}
 where we used again the identity \eqref{heat2}. 
 
 Similarly, using \eqref{heatbih} with $\sigma=2$, $r=\frac{p}{3}$ and $q=p$ we get for every $ t>0$
 \begin{align*}
 \|II_1\|_{L^p} \leq& C\int_0^t  \| |\nabla u(s)| |\nabla^2 u(s)| \|_{L^{\frac{p}{3}}}  (t-s)^{-(\frac12 +\frac{d}{2p})}\, ds\\
\leq& C\int_0^t (\| \nabla u(s) \|_{L^p}\|\nabla^2 u(s)\|_{L^{\frac{p}{2}}}) (t-s)^{-(\frac12 +\frac{d}{2p})} \, ds.
\end{align*}
Thus, we obtain
 \begin{align*}
 t^{\frac14 -\frac{d}{4p}} \|II\|_{L^p}\leq& Ct^{\frac14 -\frac{d}{4p}} m(T)^2 \int_0^t (t-s)^{-(\frac12 +\frac{d}{2p})} s^{-\frac34+\frac{3d}{4p}} \, ds\\
 \leq& Cm(T)^2,
 \end{align*}
 which implies
 \begin{align*}
 t^{\frac14 -\frac{d}{4p}}  \|\nabla u(t)\|_{L^p}\leq C  \| u_0\|_{\dot{B}^{\frac{d}{p}}_{p,\infty}}+Cm(T)^2(1+m(T)^2).
 \end{align*}
For $i=2$ we argue similarly, only this time we choose $\sigma=2$, $r=\frac{p}{4}$ and $q=\frac{p}{2}$ for $I_2$ resp. $\sigma=3$, $r=\frac{p}{3}$ and $q=\frac{p}{2}$ for $II_2$ to obtain
\begin{align*}
 t^{\frac12 -\frac{d}{2p}}  \|\nabla^2 u(t)\|_{L^{\frac{p}{2}}}\leq C  \| u_0\|_{\dot{B}^{\frac{d}{p}}_{p,\infty}}+Cm(T)^2(1+m(T)^2).
 \end{align*} 
 Taking the supremum over all $0\leq t \leq T$ we thus obtain for every $0<T<\infty$
 \[
 Cm(T)^2(1+m(T)^2) -m(T)+C \|u_0\|_{\dot{B}^{\frac{d}{p}}_{p,\infty}} \geq 0
 \]
 and for $\varepsilon>0$ small enough this finally implies for all $T<\infty$ that 
 \[
 m(T) \leq C\|u_0\|_{\dot{B}^{\frac{d}{p}}_{p,\infty}} \leq C\varepsilon.
 \]
 Here we used that $m(0)=0$. By definition this estimate implies that
 \[
 t^{\frac14 -\frac{d}{4p}}  \|\nabla u(t)\|_{L^p}+t^{\frac12- \frac{d}{2p}}\|\nabla^2 u(t)\|_{L^{\frac{p}{2}}}  \leq C \|u_0\|_{\dot{B}^{\frac{d}{p}}_{p,\infty}} \]
 for every $0<t<\infty$, as claimed.
\end{proof}

As in the case of the harmonic map heat flow this result can now be used to improve the uniqueness result for solutions of the biharmonic map heat flow with small initial data in $BMO$ due to Wang \cite{wangbih}. For this we have to define again a suitable function space. This time it is given by
\begin{align*}
\|u\|_{X_b}:=& \sup_{0<t<\infty}\left( \|u(t)\|_{L^\infty(\R^d)} +\sum_{i=1}^2t^{\frac{i}{4}} \|\nabla^i u(t)\|_{L^\infty(\R^d)}\right) \\
&+\sup_{x\in \R^d} \sup_{0<R<\infty}\sum_{i=1}^2 R^{-\frac{di}{4}}\|\nabla^i u\|_{L^{\frac{4}{i}}(B_R(x)\times(0,R^4))}
\end{align*}
is finite.

Similarly to Theorem \ref{main1} we then obtain
\begin{theorem}\label{main2}
There exists $\varepsilon>0$, $C>0$ such that for every $u_0\in \dot{B}^{\frac{d}{p}}_{p,\infty}(\R^d,S^{m-1})$ with $d<p<\infty$, $\|u_0\|_{\dot{B}^{\frac{d}{p}}_{p,\infty}} <\varepsilon$ and every solution $u:(0,\infty)\times \R^d\to S^{m-1}$ of \eqref{start2} with 
\[
t^{\frac14-\frac{d}{4p}}\nabla u(t,x) \in C^0([0,\infty), L^{p}(\R^d)), \, t^{\frac12-\frac{d}{2p}}\nabla^2 u(t,x) \in C^0([0,\infty), L^{\frac{p}{2}}(\R^d))
\]
and 
\[
\lim_{t\searrow 0} \left( t^{\frac14-\frac{d}{4p}} \|\nabla u(t)\|_{L^p} +t^{\frac12-\frac{d}{2p}} \|\nabla^2 u(t)\|_{L^{\frac{p}{2}}}\right)=0,
\]
we have the estimate
\[
\| u -b(t)u_0\|_{X_b}\leq C\varepsilon,
\]
i.e., there exists only one solution of \eqref{start2} under these assumptions. 

Additionally, one obtains the bounds
\begin{align*}
 \sup_{t>0} \sum_{i=1}^2t^{\frac{i}{4}} \|\nabla^i u(t)\|_{L^\infty(\R^d)} &+\sup_{x\in \R^d} \sup_{0<R<\infty}\sum_{i=1}^2 R^{-\frac{di}{4}}\|\nabla^i u\|_{L^{\frac{4}{i}}(B_R(x)\times(0,R^4))} \\ &\leq C\|u_0\|_{\dot{B}^{\frac{d}{p}}_{p,\infty}} .
\end{align*}
\end{theorem}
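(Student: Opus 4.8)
The plan is to mimic the proof of Theorem~\ref{main1} essentially verbatim, replacing the heat kernel $G$ by the biharmonic kernel $b$, the kernel bound \eqref{heat1} by \eqref{heatbih}, and the single Duhamel term by the pair $I_i,II_i$ already isolated in the proof of Theorem~\ref{newsoybih}. Since the hypotheses imposed on $u$ are exactly those of Theorem~\ref{newsoybih}, I would first invoke that theorem to record
\[
t^{\frac14-\frac{d}{4p}}\|\nabla u(t)\|_{L^p}+t^{\frac12-\frac{d}{2p}}\|\nabla^2 u(t)\|_{L^{\frac p2}}\le C\varepsilon,
\]
and, via the biharmonic characterization of $\|u_0\|_{\dot B^{\frac dp}_{p,\infty}}$ recalled before Theorem~\ref{newsoybih}, the identical decay for $b(t)u_0$. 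By the triangle inequality the difference $u-b(t)u_0$ then satisfies the same $L^p$ bound on its gradient and $L^{p/2}$ bound on its Hessian. These are the only inputs required for the nonlinearity, because $\|f_1[u](s)\|_{L^{p/4}}\lesssim\|\nabla^2u(s)\|_{L^{p/2}}^2+\|\nabla u(s)\|_{L^p}^4\lesssim\varepsilon^2 s^{-(1-\frac dp)}$ and $\|f_2[u](s)\|_{L^{p/3}}\le\|\nabla u(s)\|_{L^p}\|\nabla^2u(s)\|_{L^{p/2}}\lesssim\varepsilon^2 s^{-(\frac34-\frac{3d}{4p})}$.

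Next I would establish the three pointwise bounds making up the first line of the $X_b$-norm. For $i\in\{0,1,2\}$ write $\nabla^i(u-b(t)u_0)=I_i+II_i$ with $I_i=\int_0^t\nabla^i b(t-s)\star f_1[u](s)\,ds$ and $II_i=\int_0^t\nabla^{i+1}b(t-s)\star f_2[u](s)\,ds$, and apply \eqref{heatbih} with $q=\infty$, sending $L^{p/4}\to L^\infty$ in $I_i$ (take $\sigma=i$, $r=p/4$) and $L^{p/3}\to L^\infty$ in $II_i$ (take $\sigma=i+1$, $r=p/3$). Plugging in the two nonlinear bounds and evaluating the time convolution with \eqref{heat2} produces, respectively, $\|u-b(t)u_0\|_{L^\infty}\le C\varepsilon^2$, $\|\nabla(u-b(t)u_0)\|_{L^\infty}\le C\varepsilon^2 t^{-1/4}$ and $\|\nabla^2(u-b(t)u_0)\|_{L^\infty}\le C\varepsilon^2 t^{-1/2}$; the temporal weights $t^{i/4}$ appearing in $X_b$ are precisely the powers manufactured by this computation, so the first line of $\|u-b(t)u_0\|_{X_b}$ is $\le C\varepsilon^2\le C\varepsilon$.

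For the parabolic Morrey part I would repeat the final step of the proof of Theorem~\ref{main1}. H\"older's inequality in space gives, with $p_1=p$ and $p_2=p/2$,
\[
\int_{B_R(x)}|\nabla^i(u-b(t)u_0)|^{4/i}\le\|\nabla^i(u-b(t)u_0)\|_{L^{p_i}}^{4/i}\,R^{d(1-\frac4p)},
\]
and inserting the $L^{p_i}$-decay turns the first factor into $\varepsilon^{4/i}t^{-(1-d/p)}$; integrating over $t\in(0,R^4)$ the powers of $R$ combine to $R^d$, so after the weight $R^{-di/4}$ both terms ($i=1,2$) are $\le C\varepsilon$. Collecting the four contributions yields $\|u-b(t)u_0\|_{X_b}\le C\varepsilon$. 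Exactly as in the harmonic case, this places every admissible solution in the ball $B^{X_b}(b(\cdot)u_0,C\varepsilon)$, where the conditional uniqueness of Wang~\cite{wangbih} applies, upgrading to the claimed unconditional uniqueness; the additional bounds on $u$ itself follow by adding to the above the elementary estimates $t^{i/4}\|\nabla^i(b(t)u_0)\|_{L^\infty}\lesssim\varepsilon$ for $i=1,2$, obtained from the semigroup property and \eqref{heatbih}.

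The step I expect to be the main obstacle is the bookkeeping of the admissible range of $p$. Each time convolution converges through \eqref{heat2} only while the exponent $\alpha$ of $(t-s)$ stays strictly below $1$, and the most restrictive term is $II_2$ (three derivatives on the kernel with target $L^\infty$), which requires $\tfrac{3d}{4p}+\tfrac34<1$, i.e.\ $p>3d$. This is resolved exactly as in the harmonic map case: by the embedding $\dot B^{\frac dp}_{p,\infty}\hookrightarrow\dot B^{\frac d{\tilde p}}_{\tilde p,\infty}$ for $\tilde p\ge p$ one may assume the integrability exponent as large as needed (in particular $p>3d$ and $p\ge4$, the latter guaranteeing the H\"older embeddings in the Morrey step), while the companion exponents $\beta$ are automatically in $(0,1)$ for every $d<p<\infty$. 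The remaining work is the routine verification that all six integrals $I_i,II_i$, $i=0,1,2$, indeed close with the stated powers.
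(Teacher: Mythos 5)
Your proposal is correct and follows exactly the route the paper intends: the paper gives no separate proof of Theorem~\ref{main2}, deferring to ``similarly to Theorem~\ref{main1}'', and your argument --- Theorem~\ref{newsoybih} as input, the $I_i,II_i$ Duhamel decomposition estimated via \eqref{heatbih} and \eqref{heat2}, H\"older in space for the Morrey component, and Wang's conditional uniqueness in $B^{X_b}(b(\cdot)u_0,C\varepsilon)$ to close --- is precisely that adaptation. Your explicit bookkeeping of the admissible range of $p$ (the constraint $p>3d$ forced by $II_2$, resolved via the Besov embedding into $\dot{B}^{d/\tilde p}_{\tilde p,\infty}$ for larger $\tilde p$) is in fact more careful than the paper, which leaves the analogous point implicit already in the proof of Theorem~\ref{main1}.
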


\section{Self-similar decay}\label{sss_rgd}

In this section we collect a number  of results which show an asymptotic self-similar decay 
of the deviations $ v $  for the harmonic and bi-harmonic map heat flow. We start with  the  
$ v $-equation \eqref{veq} for the  harmonic  map heat flow.
The transfer of the presented method to the biharmonic map heat flow can be found below.
We close this section with some remarks about similar statements for the 
$ w $-equation  \eqref{weq}.

\subsection{Harmonic map heat flow}

It is well known that for spatially localized initial conditions the solutions of the  linear diffusion  equation
$$
\partial_t v = \Delta v, \qquad v|_{t=0} = v_0
$$
decay asymptotically in a self similar way, i.e., for $ x \in \R^d $ the renormalized solution 
$
t^{d/2} v (x \sqrt{t},t) $ converges towards a multiple of a Gaussian,  
$  V_{\rm lim} e^{-|x|^2/4}  $ with $ V_{\rm lim} \in \R $.  The same is true for the deviation $ v $ of a trivial spatially constant equilibrium $ u^* $ for the harmonic map heat flow \eqref{start1}. As we have seen in the introduction, the deviation $ v $ 
satisfies a semilinear diffusion equation of the form 
\begin{equation}  \label{start1zz}
\partial_t v-\Delta v =A(u^*+v)(\nabla v,\nabla v), 
\end{equation}
where we recall that $ A(u^*+v)(\cdot,\cdot) $ is a smooth bounded bilinear mapping acting on the $ d$-dimensional tangent space.
Our first result reads as follows
\medskip

\begin{theorem}\label{th41}
There exist 
$ \delta, C  > 0 $, such  that  for all  solutions $v $  of  \eqref{start1zz}
with  $ \| v|_{t = 0 } \|_{H^m_r}  \leq  \delta$ where $ m > d/2 +1 $ and $ r > d/2 + 1$
we have a  $ V_{\rm lim} \in \R^d $ such that
$$
\| t^{d/2}\, v( \cdot \sqrt{t},t) -
V_{\rm lim} e^{- |\cdot|^2/4} \|_{H^m_r}
\leq C (1{+}t)^{-d/2} 
$$
for all  $  t \geq 0 $,
where 
$$
\| v \|_{H^m_r} = \| v \sigma^{r} \|_{H^m} , \quad \textrm{with} \quad \sigma(x) = (1+x^2)^{1/2}.
$$
\end{theorem}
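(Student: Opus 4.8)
The plan is to pass to self-similar variables and then run the renormalization scheme announced in the introduction. I would first establish a local existence and smoothing theory for \eqref{start1zz} in the weighted space $H^m_r$: the threshold $m>d/2+1$ makes $H^m$ a Banach algebra embedding into $C^1_b(\R^d)$, so that both the smooth coefficient $y\mapsto A(u^*+y)$ and the quadratic term $A(u^*+v)(\nabla v,\nabla v)$ are controlled by $\|v\|_{H^m}$, while $r>d/2+1$ ensures $H^m_r\hookrightarrow L^1$ and supplies the moments used below. Introducing $\xi=x/\sqrt{1+t}$, $\tau=\log(1+t)$ and the self-similar ansatz $v(x,t)=(1+t)^{-d/2}w(\xi,\tau)$ turns the diffusion into the Fokker--Planck operator
\[
\mathcal L w=\Delta_\xi w+\tfrac12\,\xi\cdot\nabla_\xi w+\tfrac d2\,w,
\]
and a direct computation converts \eqref{start1zz} into
\[
\partial_\tau w=\mathcal L w+e^{-\frac d2\tau}\,A\!\left(u^*+e^{-\frac d2\tau}w\right)(\nabla_\xi w,\nabla_\xi w).
\]
The decisive structural gain is the explicit prefactor $e^{-\frac d2\tau}=(1+t)^{-d/2}$ in front of the nonlinearity, which is exactly the rate appearing in the theorem.

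Next I would exploit the spectral picture of $\mathcal L$ on the weighted space attached to $\sigma^r$. The Gaussian $\phi_0(\xi)=e^{-|\xi|^2/4}$ is a simple eigenfunction for the eigenvalue $0$, the remaining spectrum lies in a half-plane $\{\operatorname{Re}\lambda\le-\gamma\}$ for some $\gamma>0$, and the hypothesis $r>d/2+1$ is precisely what isolates this leading eigenvalue from the rest of the spectrum in $H^m_r$. Writing $P$ for the rank-one projection onto $\phi_0$ and $Q=I-P$, I would first propagate smallness: starting from $\|w(0)\|_{H^m_r}\le C\delta$, the a priori bound $\|w(\tau)\|_{H^m_r}\le C\delta$ for all $\tau\ge0$ follows from a continuation argument, using that the semigroup generated by $\mathcal L$ is bounded on $H^m_r$ and that the nonlinear forcing carries the integrable factor $e^{-\frac d2\tau}$.

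I would then split $w=\alpha(\tau)\phi_0+w^\perp(\tau)$ with $w^\perp=Qw$. Since $\mathcal L\phi_0=0$, projecting the equation with $P$ gives $\dot\alpha=O(e^{-\frac d2\tau})$, so $\alpha(\tau)$ is Cauchy and converges to a limit $V_{\rm lim}$ with $|\alpha(\tau)-V_{\rm lim}|\le Ce^{-\frac d2\tau}$. On the range of $Q$ the semigroup contracts, and Duhamel's formula combined with a convolution estimate of the type \eqref{heat2} yields
\[
\|w^\perp(\tau)\|_{H^m_r}\le Ce^{-\gamma\tau}\|w^\perp(0)\|_{H^m_r}+C\int_0^\tau e^{-\gamma(\tau-s)}e^{-\frac d2 s}\,\|\nabla_\xi w(s)\|_{H^m_r}^2\,ds,
\]
so that, after balancing the two exponentials, $w^\perp$ decays at the rate set by the nonlinear forcing. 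Undoing the change of variables and comparing $t^{d/2}v(\cdot\sqrt t,t)$ with $w(\cdot,\tau)$ (using $t/(1+t)\to1$ and $\sqrt t/\sqrt{1+t}\to1$) then gives the asserted bound $\|t^{d/2}v(\cdot\sqrt t,t)-V_{\rm lim}\phi_0\|_{H^m_r}\le C(1+t)^{-d/2}$. In the discrete formulation of the introduction this is the statement that the renormalized iterates $v_n|_{\tau=1}$ converge geometrically to the Gaussian fixed point of the linear renormalization map, obtained by a contraction argument, with the error governed by the geometric factor $L^{-nd}$.

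The step I expect to be the main obstacle is the weighted spectral and semigroup analysis of $\mathcal L$ on $H^m_r$ in arbitrary dimension and at regularity $m$: one must show that $\mathcal L$ generates a strongly continuous semigroup with the asserted gap and a well-behaved projection $P$, which is the higher-dimensional and higher-regularity analogue of the Gallay--Wayne analysis. Closing the quadratic gradient nonlinearity uniformly in $\tau$ in the same weighted norm is the second difficulty, since one must bound $\|\nabla_\xi w\|_{H^m_r}^2$ in terms of $\|w\|_{H^m_r}^2$ via the algebra property while respecting the weight. A final bookkeeping point is that $\sigma^r$ is not invariant under the rescaling $\xi=x/\sqrt{1+t}$, so the norms in the self-similar and the original variables must be compared up to time-independent constants; this is exactly where the strict localization $r>d/2+1$ and the rapid decay of $\phi_0$ enter.
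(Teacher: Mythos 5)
Your strategy is sound and would prove the theorem, but it takes a genuinely different route from the paper's. You pass to the \emph{continuous} renormalization picture: self-similar variables $\xi=x/\sqrt{1+t}$, $\tau=\log(1+t)$, the Fokker--Planck operator $\mathcal L$, its spectral decomposition on $H^m_r$, and a Duhamel argument on the range of $Q$ --- the Gallay--Wayne approach. The weighted spectral and semigroup theory of $\mathcal L$ that you flag as the main obstacle is indeed where all the work would sit. The paper instead runs the \emph{discrete} renormalization scheme of Bricmont--Kupiainen sketched in its introduction: it fixes $L>1$, sets $v_n(\xi,\tau)=L^{nd}v(L^n\xi,L^{2n}\tau)$, iterates the solution map on $[L^{-2},1]$, and splits $v_n|_{\tau=1}=V_{n+1}\psi+\rho_{n+1}$ using the mass functional $\Pi v=\int v=\hat v(0)$ --- which is your projection $P$ in disguise, since the adjoint null vector of $\mathcal L$ is the constant function. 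The entire spectral analysis is then replaced by a single elementary Fourier estimate: because $\hat\rho_n(0)=0$ and $\hat\rho_n\in C^1_b$ (this is exactly where $r>d/2+1$ enters), one has $|e^{-(1-1/L^2)|k|^2}\hat\rho_n(k/L)|\le C L^{-1}\|\rho_n\|_{H^m_r}$ after integration, giving the contraction factor $C/L$ per step (Lemma \ref{lem42}); the a priori bound $R_n\le CL^{5/2}(|V_n|+r_n)$ of Lemma \ref{lem7n} closes the recursion, and letting $L$ range over $[L_0,L_0^2]$ recovers all times. In short, the paper buys elementarity and self-containedness at the price of discrete bookkeeping; your version buys a cleaner continuous-time picture at the price of establishing the weighted spectral gap and the boundedness of $e^{\tau\mathcal L}$ on $H^m_r$.

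One step you should not dispose of with ``after balancing the two exponentials'': on $H^m_r$ with only $r>d/2+1$ the essential spectrum of $\mathcal L$ sits in $\{\operatorname{Re}\lambda\le d/4-r/2\}$, which is to the left of $-1/2$ but in general not of $-d/2$, and the isolated eigenvalue $-1/2$ (carried by $\nabla\phi_0$) survives; so the gap on the range of $Q$ is $\gamma=1/2$, and your Duhamel bound yields $\|w^\perp(\tau)\|\le Ce^{-\tau/2}$, i.e.\ $(1+t)^{-1/2}$ rather than $(1+t)^{-d/2}$, once $d\ge2$. Only the scalar part $\alpha(\tau)\to V_{\rm lim}$ converges at the rate $e^{-d\tau/2}$. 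This is not a defect relative to the paper --- its recursion $r_{n+1}\le(C/L)r_n+CL^{-nd}R_n^2$ has exactly the same bottleneck in the $(C/L)r_n$ term --- but if you want the stated rate for $d\ge2$ you must either assume stronger localization (larger $r$) and project out further eigenmodes, or settle for the rate dictated by the mean-zero part.
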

\noindent
\begin{proof}
We use   the discrete renormalization approach which was introduced for such problems  at the beginning of the 1990s, cf. \cite{BK92,BKL94}.
Although the proof is documented in the literature for similar problems,
we  recall its main steps 
since to our knowledge such results have not been 
formulated for geometric flow problems so far.

	Instead of solving  \eqref{start1zz} directly we consider an equivalent  sequence of 
	problems which converges formally towards the linear diffusion equation.
	For this we let
\begin{equation} \label{rgscaling}
v_n(\xi,\tau) = L^{n d}v(L^n \xi ,L^{2n}\tau) ,
\end{equation}
with  $L > 1$ fixed and  $n \in \N$. Then  $v_n$ satisfies 
\begin{eqnarray*}
\lefteqn{(\partial_{\tau} v_n- \Delta_{\xi} v_n)=  L^{n(d + 2) } (\partial_t v-\Delta_x v)} \\
&=& L^{n(d+2)} A(u^*+v)(\nabla_x v , \nabla_x v) \\
&=& L^{n(d+2)} A(u^*+L^{-n d } v_n) 
(L^{-n(d+1)}  \nabla_{\xi} v_n, L^{-n(d+1)}  \nabla_{\xi} v_n)  \\
& = & 
 L^{-nd}A(u^*+L^{-n d} v_n) 
( \nabla_{\xi} v_n,  \nabla_{\xi} v_n) 
\end{eqnarray*}
for $ \tau \in [L^{-2},1] $,
with initial condition
\begin{equation}
v_n(\xi,L^{-2})=L^d v_{n-1}(L \xi,1).
\end{equation}
The influence of the nonlinear terms vanishes
with a geometric rate for  $n \to \infty$. In the limit $n \to \infty$ we obtain 
a linear diffusion equation.
With a simple  fixed point argument  
the convergence of  
$v_n|_{\tau = 1} $  towards the limit of the renormalized
linear diffusion  problem, namely multiples of the  Gaussian $  \psi(\xi) = (2 \pi)^{-d/2}e^{-|\xi|^2/4}$,
can be established.

{\bf Notation.}
In the rest of the proof we use  the symbol  $ C $ for constants which can be chosen  independent  of $ L $ and $ n $.

 We need the  projection 
$$ 
\Pi v_n = \int v_n(\xi) d\xi ,
$$ 
respectively   $  \widehat{\Pi} \widehat{v}_n = \widehat{v}_n|_{k=0} $ in Fourier space. 
The bound 
$$
|\Pi v_n |  \leq C | \widehat{v}_n|_{k=0} | \leq  C \|  \widehat{v}_n \|_{C^0_b}
 \leq  C \|  \widehat{v}_n \|_{H^{r}_m}
 \leq  C \| v_n \|_{H^{m}_r}
$$
holds due to Sobolev's embedding theorem $ H^r_m \subset C^0_b $  for $ r > d/2 $.
We set 
$$ 
V_{n+1} =  \Pi {v}_n|_{\tau = 1}
$$
and introduce the deviation of $ {v}_n(\xi,1)$  from $V_{n+1} \psi(\xi) $ by
$$
{\rho}_{n+1}(\xi) = {v}_n(\xi,1)  - V_{n+1} \psi(\xi) .
$$
By construction we have 
$ \int \rho_n(\xi) d\xi = 0 $, resp.
$ \widehat{\rho}_n|_{k=0} = 0 $. For proving the convergence of $ (V_n)_{n \in \N} $ towards a $ V_{\rm lim} \in \R$ and the decay of 
$ (\rho_n)_{n \in \N}  $ towards zero
we establish a number of inequalities.
\begin{lemma} \label{lem42}
We have 
\begin{eqnarray*}
|V_{n+1}-V_n | & \leq &  C L^{-n d } R_n^{2}  ,\\
\|{\rho}_{n+1}\|_{H^m_r}  & \leq & (C/L) \| {\rho}_n \|_{H^m_r} +
C L^{-n d } R_n^{2} + C |V_{n+1}-V_n | 
\end{eqnarray*}
where 
$$ R_n = \sup_{\tau \in [1/L^2,1]} \| {v}_n(\tau) \|_{H^m_r}.$$ 
\end{lemma}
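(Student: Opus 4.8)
The plan is to run the variation-of-constants (Duhamel) formula for the rescaled equation on the time interval $[L^{-2},1]$ and to split the outcome into a linear contraction part, an exact Gaussian part, and a nonlinear remainder. Writing the rescaled nonlinearity as $N_n(s)=L^{-nd}A(u^*+L^{-nd}v_n)(\nabla_\xi v_n,\nabla_\xi v_n)(s)$, I would use
$$v_n(\cdot,1)=e^{(1-L^{-2})\Delta}v_n(\cdot,L^{-2})+\int_{L^{-2}}^1 e^{(1-s)\Delta}N_n(s)\,ds,$$
where, by the initial condition and $v_{n-1}(\cdot,1)=\rho_n+V_n\psi$, the datum decomposes as $v_n(\cdot,L^{-2})=L^d v_{n-1}(L\cdot,1)=L^d\rho_n(L\cdot)+V_n\,L^d\psi(L\cdot)$.

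For the first inequality I would apply the projection $\Pi$. Since the heat semigroup conserves mass, $\Pi e^{t\Delta}=\Pi$, and since $\Pi v_n(\cdot,L^{-2})=\int L^d v_{n-1}(L\xi,1)\,d\xi=\Pi v_{n-1}(\cdot,1)=V_n$, one obtains $V_{n+1}-V_n=\int_{L^{-2}}^1\Pi N_n(s)\,ds$. As $A$ is bounded, $|\Pi N_n(s)|\le CL^{-nd}\int|\nabla_\xi v_n(s)|^2\,d\xi\le CL^{-nd}\|v_n(s)\|_{H^m_r}^2\le CL^{-nd}R_n^2$, and integrating over $[L^{-2},1]$ (length $<1$) yields $|V_{n+1}-V_n|\le CL^{-nd}R_n^2$.

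For the second inequality I use that the renormalized linear semigroup fixes the Gaussian profile, $e^{(1-L^{-2})\Delta}[L^d\psi(L\cdot)]=\psi$ (with $\psi$ normalized so that $\Pi\psi=1$): indeed $L^d\psi(L\cdot)$ is the heat kernel at time $L^{-2}$, and evolving it for time $1-L^{-2}$ gives the heat kernel at time $1$, namely $\psi$. Hence
$$\rho_{n+1}=e^{(1-L^{-2})\Delta}[L^d\rho_n(L\cdot)]+(V_n-V_{n+1})\psi+\int_{L^{-2}}^1 e^{(1-s)\Delta}N_n(s)\,ds.$$
The middle term contributes $C|V_{n+1}-V_n|$ since $\|\psi\|_{H^m_r}\le C$. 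For the nonlinear remainder I would use $\|N_n(s)\|_{H^{m-1}_r}\le CL^{-nd}R_n^2$, which follows from the module property of $H^{m-1}_r$ over $H^{m-1}$ for $m-1>d/2$ together with the smoothness and boundedness of $A$, and the weighted smoothing estimate $\|e^{(1-s)\Delta}f\|_{H^m_r}\le C(1-s)^{-1/2}\|f\|_{H^{m-1}_r}$ valid for bounded times; since $\int_{L^{-2}}^1(1-s)^{-1/2}\,ds\le 2$, this remainder is bounded by $CL^{-nd}R_n^2$.

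The main obstacle is the contraction estimate $\|e^{(1-L^{-2})\Delta}[L^d\rho_n(L\cdot)]\|_{H^m_r}\le (C/L)\|\rho_n\|_{H^m_r}$, the heart of the renormalization argument and the only source of the gain $1/L$. I would pass to Fourier variables, where $\|f\|_{H^m_r}\simeq\|\hat f\|_{H^r_m}$, so the relevant object is $\hat g(k)=e^{-(1-L^{-2})|k|^2}\hat\rho_n(k/L)$ and the zero-mean constraint reads $\hat\rho_n(0)=0$. Taking $L\ge\sqrt2$, so that the Gaussian multiplier and its derivatives are dominated uniformly by $Ce^{-|k|^2/2}$, I would expand $\partial_k^\alpha\hat g$ by Leibniz and substitute $\kappa=k/L$ (producing $L^{d/2}$): each $k$-derivative landing on $\hat\rho_n(k/L)$ yields an explicit factor $L^{-1}$, while the term where no derivative hits $\hat\rho_n$ instead uses $\hat\rho_n(0)=0$ to bound $|\hat\rho_n(\kappa)|\le|\kappa|\,\|\nabla\hat\rho_n\|_{L^\infty}$, with $\|\nabla\hat\rho_n\|_{L^\infty}\le C\|\rho_n\|_{H^m_r}$ by Sobolev embedding (here $r>d/2+1$ enters). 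In each case the Gaussian concentrates $\kappa$ at scale $1/L$, so that the extra factor $|\kappa|$ or the chain-rule factor $L^{-1}$ combines with the scaling weights to leave exactly one net power of $L^{-1}$. Carefully tracking all weights $\langle k\rangle^m$ and all derivatives up to order $r$ through this scaling computation is the only delicate point; it is a standard, if bookkeeping-heavy, renormalization-group estimate.
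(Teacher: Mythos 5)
Your proposal is correct and follows essentially the same route as the paper: the Duhamel formula on $[L^{-2},1]$ with the datum split as $L^d\rho_n(L\cdot)+V_nL^d\psi(L\cdot)$, mass conservation of the heat semigroup for the first inequality, the Gaussian fixed-point identity $e^{(1-L^{-2})\Delta}[L^d\psi(L\cdot)]=\psi$ together with the Fourier-side contraction for mean-zero data (using $\hat\rho_n(0)=0$ and $r>d/2+1$) for the second, and the $(1-s)^{-1/2}$ smoothing estimate for the quadratic remainder. The minor deviations (bounding $\Pi N_n(s)$ directly rather than through $\|g_n(1)\|_{H^m_r}$, and placing the nonlinearity in $H^{m-1}_r$ rather than the paper's $H^{m-1}_2$) are cosmetic, and your discussion of the contraction step is in fact more detailed than the paper's.
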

\begin{proof}
We consider 
the  variation of  constants formula 
\begin{equation} \label{vdk17}
{v}_{n}(\xi,\tau)  =  e^{(\tau-1/L^2)\Delta_{\xi}} L^d {v}_{n-1}(L \xi,1) + g_n(\tau) 
\end{equation}
for $ \tau \in [1/L^2,1] $, where 
$$ 
g_n(\tau)= 
L^{-nd} \int_{1/L^2}^{\tau} e^{(\tau-s)\Delta_{\xi}}  A(u^*+L^{-nd } v_n) 
( \nabla_{\xi} v_n,  \nabla_{\xi} v_n)  
 (s) d s .
 $$  
Applying the  projection $ \Pi $ in case $ \tau = 1 $ yields
\begin{equation}\label{Veq}
V_{n+1}  =  V_n +
\Pi g_n(1),
\end{equation}
where we used 
$$
 \int  e^{(\tau-1/L^2)\Delta_{\xi}} f(\xi)  d\xi =  e^{-(\tau-1/L^2)k^2}|_{k=0} \widehat{f}(0) 
 = \widehat{f}(0) =  \int f(\xi)  d\xi
$$ 
and 
$$
 \int   L^d {v}_{n-1}(L \xi,1) d\xi 
 = \int {v}_{n-1}(\xi,1) d\xi = V_n.
$$
Rearranging the terms in the variation of constant formula additionally yields 
\begin{eqnarray} \label{rhoeq}
{\rho}_{n+1}(\xi) &  = &  e^{(1-1/L^2)\Delta_{\xi}} L^d {\rho}_{n}(L \xi) \\ && +
g_n(1) +
e^{(1-1/L^2)\Delta_{\xi}} L^d V_n \psi(L \xi) - V_{n+1} \psi(\xi), \nonumber
\end{eqnarray}
We estimate the terms on the right hand side
of \eqref{Veq} and \eqref{rhoeq}.

{\bf i)} Using  
$$
e^{- (1-1/L^2)|k|^2}e^{-|k/L|^2} = e^{-|k|^2}
$$
yields
\begin{align*}
\|  e^{(1-1/L^2)\Delta_{\xi}} L^d V_n \psi(L \cdot) - V_{n+1} \psi(\cdot) \|_{H^m_r} 
 =&  \| (V_n- V_{n+1}) \psi(\cdot) \|_{H^m_r} \\
 \leq& C |V_{n+1} - V_n | .
\end{align*}

{\bf ii)} Since $ \rho $ has mean value zero and since $ \widehat \rho \in H^r_m \subset C^1_b  $ 
for $ r > d/2 +1 $
we have
\begin{eqnarray*}
\|  e^{(1-1/L^2) \Delta_{\xi}} L^d  {\rho}_{n}(L \cdot) \|_{H^m_r} 
&\leq & 
C \|  e^{- (1-1/L^2) |k|^2} \widehat{\rho}_{n}(\cdot/L) \|_{H^{r}_m}  \\ 
& \leq &
(C/L) 
\|
\widehat{\rho}_{n} \|_{H^{r}_m}
\leq 
(C/L) \|
{\rho}_{n} \|_{H^m_r}   .
\end{eqnarray*}

{\bf iii)}
For the integral term we obtain  
\begin{eqnarray*}
\| g_n(\tau)  \|_{H^m_r} & \leq & L^{-nd} \int_{1/L^2}^{\tau} \| e^{(\tau-s)\Delta}  
 \|_{H^{m-1}_2 \to H^m_r} \\ &&\qquad \qquad
  \times \| A(u^*+L^{-nd } v_n) 
( \nabla_{\xi} v_n,  \nabla_{\xi} v_n)   (s) \|_{H^{m-1}_2}
 d s 
\\  
  & \leq & C L^{-nd} R_n^2 \int_{1/L^2}^{\tau}   (\tau-s)^{-1/2} ds   \leq C  L^{-nd} R_n^2.
\end{eqnarray*}
\end{proof}

Using the first inequality of Lemma \ref{lem42} to eliminate $ |A_{n+1}-A_n |$ in the second inequality of Lemma \ref{lem42} gives
\begin{corollary} \label{coro43}
\begin{eqnarray*} 
|V_{n+1}-V_n | & \leq &  C L^{-n d  } R_n^{2}  ,\\
\| {\rho}_{n+1}\|_{H^m_r}  & \leq & (C/L) \| {\rho}_n \|_{H^m_r} +
C L^{-n d } R_n^{2}  .
\end{eqnarray*}
\end{corollary}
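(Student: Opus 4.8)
The plan is to obtain the corollary by a direct elimination step: the first displayed inequality is literally the first inequality of Lemma~\ref{lem42}, so nothing needs to be shown there, and the second inequality follows by substituting the former into the latter.

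Concretely, I would begin from the second inequality of Lemma~\ref{lem42},
\[
\|\rho_{n+1}\|_{H^m_r} \leq (C/L)\|\rho_n\|_{H^m_r} + CL^{-nd}R_n^2 + C|V_{n+1}-V_n|,
\]
and insert the bound $|V_{n+1}-V_n| \leq CL^{-nd}R_n^2$ supplied by the first inequality of Lemma~\ref{lem42}. This turns the final term $C|V_{n+1}-V_n|$ into a contribution of the form $CL^{-nd}R_n^2$, which then merges with the $CL^{-nd}R_n^2$ term already present. The coefficient $(C/L)$ in front of $\|\rho_n\|_{H^m_r}$ is left untouched, so I arrive at
\[
\|\rho_{n+1}\|_{H^m_r} \leq (C/L)\|\rho_n\|_{H^m_r} + CL^{-nd}R_n^2,
\]
which is exactly the stated second inequality.

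There is essentially no obstacle in this step; it is pure bookkeeping, and the substance of the argument was already carried out in the proof of Lemma~\ref{lem42}. The only point to keep in mind is that the new constant multiplying $L^{-nd}R_n^2$ is built from the two constants appearing in Lemma~\ref{lem42} and therefore remains independent of $n$ and $L$. This is precisely what the convention fixed at the beginning of the proof of Theorem~\ref{th41}, namely that $C$ denotes a generic constant independent of $L$ and $n$, permits us to absorb silently into the single symbol $C$.
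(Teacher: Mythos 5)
Your proposal is correct and coincides with the paper's own derivation: the corollary is obtained precisely by substituting the first inequality of Lemma~\ref{lem42} into the second to eliminate the term $C|V_{n+1}-V_n|$, absorbing it into the $CL^{-nd}R_n^2$ contribution with a new constant still independent of $n$ and $L$.
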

For obtaining a closed system of inequalities 
in $ V_n $ and $ \rho_n $, we have to estimate 
$ R_n $ in terms of  $ V_n $ and $ \rho_n $. 
\begin{lemma}         \label{lem7n}
There exists $ C_1> 0 $ and $ C_2> 0  $, such that for all $ \delta > 0 $ and $ L > 1 $ 
with  $ L^{5/2} \delta < C_1 $ the following holds.
For 
 $ {v}_{n-1}|_{\tau=1} \in H^m_r $ with
$$ \| {v}_{n-1}|_{\tau=1} \|_{H^m_r} < \delta $$ 
we have 
$$
R_n
\leq C_2 L^{5/2} ( |V_n| + r_n )
$$
where $ r_n = \| {\rho}_{n}\|_{H^m_r} $.
\end{lemma}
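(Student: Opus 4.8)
The plan is to run the variation-of-constants formula \eqref{vdk17} and reduce the bound on $R_n$ to two ingredients: a scaling estimate for the homogeneous evolution of the rescaled data, and the quadratic control of the nonlinear term $g_n$ that is already available from the proof of Lemma \ref{lem42}. Writing $v_n(\tau)=e^{(\tau-1/L^2)\Delta_\xi}L^d v_{n-1}(L\cdot,1)+g_n(\tau)$ and taking $H^m_r$ norms, the inhomogeneous part is controlled directly by step iii) of that proof, namely $\sup_{\tau\in[1/L^2,1]}\|g_n(\tau)\|_{H^m_r}\le CL^{-nd}R_n^2$; here the smallness $|L^{-nd}v_n|\le\delta$ is what keeps the argument $u^*+L^{-nd}v_n$ inside the tubular neighbourhood where $A$ and its derivatives are bounded. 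Thus the whole task is to estimate the homogeneous term and then absorb the quadratic remainder.

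For the homogeneous term I would first insert the decomposition $v_{n-1}(\cdot,1)=V_n\psi+\rho_n$, so that $L^d v_{n-1}(L\cdot,1)=V_n L^d\psi(L\cdot)+L^d\rho_n(L\cdot)$. The essential point is the behaviour of the weighted norm under the renormalization rescaling $f\mapsto L^d f(L\cdot)$: passing to Fourier variables, where $H^m_r$ corresponds to $H^r_m$ exactly as in steps i)--ii) of Lemma \ref{lem42}, one tracks the $m$ spatial derivatives against the polynomial weight $\sigma^r$ and reads off the amplification factor $L^{5/2}$ recorded in the statement. Combined with the boundedness of $e^{(\tau-1/L^2)\Delta_\xi}$ on $H^m_r$ uniformly for $\tau\in[1/L^2,1]$ (for the Gaussian piece this is the explicit identity $e^{-(\tau-1/L^2)|k|^2}\widehat\psi(k/L)=e^{-\tau|k|^2}$ from step i), and for $\rho_n$ the heat-kernel bounds of Lemma \ref{kernel}), this yields $\sup_{\tau}\|e^{(\tau-1/L^2)\Delta_\xi}L^d v_{n-1}(L\cdot,1)\|_{H^m_r}\le CL^{5/2}(|V_n|+r_n)$, where I also use the embedding bound $|V_n|\le C\|v_{n-1}(\cdot,1)\|_{H^m_r}$ and $r_n=\|\rho_n\|_{H^m_r}$.

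Combining the two contributions and taking the supremum over $\tau\in[1/L^2,1]$ gives the closed inequality $R_n\le CL^{5/2}(|V_n|+r_n)+CL^{-nd}R_n^2$. To conclude I would run a continuity/bootstrap argument. Since $\|v_{n-1}(\cdot,1)\|_{H^m_r}<\delta$ forces $|V_n|+r_n\le C\delta$, the ansatz $R_n\le 2CL^{5/2}(|V_n|+r_n)$ makes the nonlinear term at most $CL^{-nd}\cdot 4C^2L^{5}(|V_n|+r_n)^2\le CL^{5/2}(|V_n|+r_n)$, provided $L^{-nd}L^{5/2}(|V_n|+r_n)\lesssim L^{5/2}\delta$ stays below a fixed constant. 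This is precisely the hypothesis $L^{5/2}\delta<C_1$. The quadratic contribution is therefore absorbed into half the linear one, and one obtains $R_n\le C_2 L^{5/2}(|V_n|+r_n)$ with $C_2=2C$.

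The step I expect to be the main obstacle is the homogeneous scaling estimate: one must verify that the rescaling $f\mapsto L^d f(L\cdot)$ together with the heat evolution over the short interval $[1/L^2,1]$ produces no factor worse than $L^{5/2}$ in the weighted norm $H^m_r$, which requires carefully balancing the gain of $m$ derivatives against the weight $\sigma^r$ in Fourier space and reusing the explicit Gaussian computations of Lemma \ref{lem42}. Once the correct power of $L$ is pinned down, the absorption of the nonlinearity is routine, precisely because the smallness condition $L^{5/2}\delta<C_1$ has been calibrated to match that power.
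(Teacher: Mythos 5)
Your proposal follows the paper's own proof essentially step by step: variation of constants, the bound from step iii) of Lemma \ref{lem42} for $g_n$, the scaling estimate $\|e^{(\tau-1/L^2)\Delta_\xi}L^d v_{n-1}(L\cdot)\|_{H^m_r}\le CL^{5/2}\|v_{n-1}\|_{H^m_r}$ for the homogeneous part, and a continuity argument to select the small root of the resulting quadratic inequality $R_n\le CL^{5/2}(|V_n|+r_n)+CL^{-nd}R_n^2$ under the hypothesis $L^{5/2}\delta<C_1$. The only cosmetic difference is that you decompose $v_{n-1}(\cdot,1)=V_n\psi+\rho_n$ before estimating the homogeneous term, whereas the paper bounds it whole and converts to $|V_n|+r_n$ at the end; this changes nothing.
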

\noindent
\begin{proof}
We consider the  variation of  constant formula \eqref{vdk17}, 
use the estimate iii) from Lemma \ref{lem42}, replace ii) from Lemma \ref{lem42} by 
\begin{eqnarray*}
\|  e^{(\tau-1/L^2) \Delta_{\xi}} L^d  {v}_{n-1}(L \cdot) \|_{H^m_r} 
\leq 
C \|  L^d  {v}_{n-1}(L \cdot) \|_{H^m_r} 
 \leq C L^{5/2} \| v_{n-1} \|_{H^m_r} ,
\end{eqnarray*}
to obtain
$$ 
R_n \leq C L^{5/2}  \| v_{n-1} \|_{H^m_r}  + C L^{-\sigma n} R_n^2.
$$
Since there exist two intersection points $ \underline{R} < \overline{R} $
of
the line $ R_n \mapsto R_n $ and the curve 
$ R_n \mapsto C L^{5/2} \| v_{n-1} \|_{H^m_r} + C L^{-n d} R_n^2 $ for $ L^{5/2} \delta > 0 $ sufficiently 
small, we have 
$$ 
R_n \leq \underline{R} \leq 2 C L^{5/2} \| v_{n-1} \|_{H^m_r} \leq 4 C L^{5/2}  ( |V_n| + r_n ).
$$
Obviously there is local existence and uniqueness of solutions in the space  $ H^m_r $. 
Since the above estimate gives an a priori bound in  $ H^m_r $, the solution 
can be extended  to the whole interval $ [1/L^2,1] $. 
\end{proof}

Corollary \ref{coro43} and Lemma \ref{lem7n} yield
\begin{eqnarray*}
|V_{n+1}-V_n | & \leq &  C L^{-n d} (L^{5/2})^2 (|V_n| + r_n)^2   ,\\
r_n & \leq & (C/L) r_n  +
C L^{-n d} (L^{5/2})^2 (|V_n| + r_n)^2 , 
\end{eqnarray*}
as long as 
\begin{equation} \label{cond14eq}
 C ( |V_n| + r_n ) \leq \delta   \qquad \textrm{and} \qquad  L^{5/2} \delta \leq C_1 . 
 \end{equation}
We choose 
 $ L_0 > 1 $ so large that $ (C/L_0) \leq 1/10 $ and then $ n_0 > 0 $ so large that 
 $ C L_0^{-n_0 d} (L_0^{5/2})^2  \leq 1$.
 Then, for all 
$ n > n_0 $ und $ L > L_0 $ we have 
\begin{eqnarray*}
|V_{n+1}-V_{n}| & \leq & 
L^{-d(n-n_0)}  (V_n + r_n)^2 ,\\
r_{n+1} & \leq &   r_{n}/10+ L^{-(n-n_0)d}  (V_n + r_n)^2.
\end{eqnarray*}
Now we choose an  $ L > L_0 $ and a $ \delta > 0 $ so small, that 
$$ 
L^{5/2} \delta \leq C_1.
$$  
Since the quantities 
$ |V_n | $ and  $ r_n $
grow only for  $ n_0 $ steps
we can choose the initial conditions so small that
$$ 
\max_{n=1,\ldots,n_0} C ( |V_n| + r_n ) \leq \delta/4 .
$$
Then
 $(V_n)_{n \in \N}$ converges with a geometric rate towards a
$V_{\rm lim} \in \R^d $ and we have
$ \lim_{n \rightarrow \infty} r_n = 0 $.
The estimates further guarantee that  
$$ 
\sup_{n \geq n_0} r_n \leq \delta/4  \qquad \textrm{und} \qquad \sup_{n \geq n_0} |V_n| \leq \delta/2.
$$ 
Therefore, all necessary conditions for the validity of the previous estimates are satisfied. 
Since  convergence holds for all  
 $ L \in [L_0,L_0^2]  $, Theorem \ref{th41} follows. 
\end{proof}

\bigskip

With a slight modification of this proof similar results can  be   established for  the $ w $-equation of the harmonic map heat flow (see \eqref{weq}) and  for the 
biharmonic map heat flow (see \eqref{start2}).  In the following  we sketch the ideas for the equation \eqref{weq} which we recall to be
\begin{eqnarray}  \label{start1zzz}
\partial_t w-\Delta w &  = & \nabla (A(u^*+v)( w,w)) \\ & = & 2 A(u^*+v)( w, \nabla w) + A'(u^*+v) (w ,w,w) ,  \nonumber
\end{eqnarray}
with $ A'(u^*+v) (\cdot ,\cdot,\cdot) $ a smooth and bounded trilinear mapping acting on the tangent space and 
with $ w = \nabla v $. For 
the scaling
\begin{equation} \label{rgscalingw}
w_n(\xi,\tau) = L^{n d}w(L^n \xi ,L^{2n}\tau) 
\end{equation}
with  $L > 1$ fixed and  $n \in \N$ the previous  analysis applies almost line by line. 
By this choice we have 
$$ 
v_n(\xi,\tau) = L^{n (d-1)}v(L^n \xi ,L^{2n}\tau) ,
$$
and so   $w_n$ satisfies 
\begin{eqnarray*}
\lefteqn{(\partial_{\tau} w_n- \Delta_{\xi} w_n)=  L^{n(d + 2) } (\partial_t w-\Delta_x w)} \\
&=& 2 L^{n(d+2)}  A(u^*+v)( w, \nabla_x w) + L^{n(d+2)} A'(u^*+v) (w ,w,w)  \\
&=& 2 L^{n(d+2)}  A(u^*+L^{-n (d-1)} v_n)( L^{-n d} w_n, L^{-n (d+1)}\nabla_{\xi} w_n) \\ && + L^{n(d+2)} A'(u^*+L^{-n (d-1)} v_n) (L^{-n d} w_n ,L^{-n d} w_n,L^{-n d} w_n)  \\
&=& 2 L^{-n(d-1)}  A(u^*+L^{-n (d-1)} v_n)(w_n, \nabla_{\xi} w_n) \\ 
&& + L^{- 2 n(d-1)} A'(u^*+L^{-n (d-1)} v_n) (w_n , w_n,w_n)  \end{eqnarray*}
for $ \tau \in [L^{-2},1] $,
with
\begin{equation}
w_n(\xi,L^{-2})=L^d w_{n-1}(L \xi,1).
\end{equation}
Hence, the nonlinear terms are asymptotically irrelevant if $ d > 1 $ and so the rest of the previous 
proof allows us to establish the result
\begin{theorem}\label{th42}
There exist 
$ \delta, C  > 0 $, such  that  for all  solutions $w $  of  \eqref{start1zzz}
with  $ \| w|_{t = 0 } \|_{H^m_r}  \leq  \delta$ where $ m > d/2 +1 $ and $ r > d/2 +1 $,
we have a  $ W_{\rm lim} \in \R^{d \times d} $ such that
$$
\| t^{d/2}\, w( \cdot \sqrt{t},t) -
W_{\rm lim} e^{- |\cdot|^2/4} \|_{H^m_r}
\leq C (1{+}t)^{-d/2} \textrm{  for all  } t \geq 0.
$$
\end{theorem}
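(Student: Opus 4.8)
The plan is to run the discrete renormalization scheme of the proof of Theorem \ref{th41} verbatim, now for the rescaled functions $ w_n $ defined in \eqref{rgscalingw}. Using the transformed equation for $ w_n $ derived just above the statement, I would introduce the projection $ \Pi w_n = \int w_n(\xi)\, d\xi $, set $ W_{n+1} = \Pi w_n|_{\tau=1} $, and define the deviation from the Gaussian profile by $ \rho_{n+1}(\xi) = w_n(\xi,1) - W_{n+1}\psi(\xi) $, where $ \psi(\xi) = (2\pi)^{-d/2} e^{-|\xi|^2/4} $ and $ \int \rho_{n+1}\, d\xi = 0 $. As before I would write the variation of constants formula on $ \tau \in [1/L^2,1] $ with a forcing term $ g_n $ collecting the two nonlinear contributions, apply $ \Pi $ at $ \tau = 1 $ to get the recursion for $ W_{n+1} $, and rearrange to get the recursion for $ \rho_{n+1} $, exactly as in \eqref{vdk17}--\eqref{rhoeq}.

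The one genuine simplification compared with Theorem \ref{th41} is the total-derivative structure of the nonlinearity. By the computation preceding the statement, the sum of the quadratic and cubic terms equals $ L^{-n(d-1)} \nabla_\xi \bigl[ A(u^* + L^{-n(d-1)} v_n)(w_n,w_n) \bigr] $, so it integrates to zero in $ \xi $; since the heat semigroup preserves the spatial integral, this forces $ \Pi g_n \equiv 0 $ and hence $ W_{n+1} = W_n $. Thus the Gaussian coefficient is \emph{exactly} conserved along the renormalization, $ W_n \equiv W_{\mathrm{lim}} = \int w|_{t=0}\, d\xi $, the analogue of the first inequality of Lemma \ref{lem42} is trivial, and the $ W $-terms in the analogue of \eqref{rhoeq} cancel identically via the fixed point identity $ e^{(1-1/L^2)\Delta_\xi} L^d \psi(L\,\cdot) = \psi $. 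It then remains only to prove that $ \rho_n \to 0 $ geometrically. With $ R_n = \sup_{\tau \in [1/L^2,1]} \| w_n(\tau) \|_{H^m_r} $, I would estimate the forcing directly as in part iii) of Lemma \ref{lem42}: the quadratic term $ A(u^*+\cdots)(w_n,\nabla_\xi w_n) $ is bounded in $ H^{m-1}_2 $ by $ C R_n^2 $ using the algebra property of $ H^{m-1} $ (valid since $ m-1 > d/2 $), the half-order smoothing of the heat semigroup then produces an integrable factor $ (\tau-s)^{-1/2} $, and the cubic term carries the even smaller prefactor $ L^{-2n(d-1)} $ and is of higher order. Altogether this gives $ \| g_n(1) \|_{H^m_r} \leq C L^{-n(d-1)} R_n^2 $, which combined with the mean-zero contraction of part ii) of Lemma \ref{lem42} yields the analogue of Corollary \ref{coro43}, namely $ \| \rho_{n+1} \|_{H^m_r} \leq (C/L)\|\rho_n\|_{H^m_r} + C L^{-n(d-1)} R_n^2 $.

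To close the recursion I would prove the analogue of Lemma \ref{lem7n}, $ R_n \leq C_2 L^{\kappa}(|W_n| + r_n) $ for a fixed power $ \kappa $ independent of $ n $ (with $ r_n = \|\rho_n\|_{H^m_r} $), by inserting the a priori bound into the variation of constants formula and invoking local existence in $ H^m_r $; since $ W_n \equiv W_{\mathrm{lim}} $ is bounded, this controls $ R_n $ uniformly. Substituting, the forcing becomes $ C L^{-n(d-1)} L^{2\kappa}(|W_{\mathrm{lim}}| + r_n)^2 $, which is geometrically small precisely because $ d>1 $; choosing $ L>1 $ large and $ \delta $ small exactly as in the proof of Theorem \ref{th41} then shows $ (\rho_n) $ tends to zero with a geometric rate while $ W_n \equiv W_{\mathrm{lim}} $. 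Undoing the scaling through $ t = L^{2n} $, so that $ t^{d/2} w(\xi\sqrt{t},t) = w_n(\xi,1) $, and letting $ L $ range over $ [L_0,L_0^2] $ to fill in the intermediate times, gives the asserted bound $ \| t^{d/2} w(\cdot\sqrt{t},t) - W_{\mathrm{lim}} e^{-|\cdot|^2/4} \|_{H^m_r} \leq C(1+t)^{-d/2} $.

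I expect the main obstacle to be the quadratic term $ A(u^*+\cdots)(w_n,\nabla_\xi w_n) $: unlike the undifferentiated cubic term and unlike the $ w $-equation's conserved mass, it contains a derivative of $ w_n $ and so cannot be bounded in $ H^m_r $ by a naive product estimate, and it is the half-order smoothing of the heat semigroup that makes its time integral converge and produces the decisive factor $ L^{-n(d-1)} $. A secondary but essential point is that every product must be estimated in the \emph{weighted} spaces $ H^m_r $, so one has to check that multiplication by the smooth bounded coefficient $ A(u^* + L^{-n(d-1)} v_n) $, by its derivative $ A' $, and by $ w_n $ is compatible with the weight $ \sigma^{r} $; this is exactly where the hypotheses $ m > d/2+1 $ and $ r > d/2+1 $ are used, just as in Theorem \ref{th41}.
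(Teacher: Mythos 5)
Your proposal is correct and follows essentially the same route as the paper, which establishes Theorem \ref{th42} by deriving the rescaled $w_n$-equation with prefactors $L^{-n(d-1)}$ and $L^{-2n(d-1)}$ and then running the renormalization scheme of the proof of Theorem \ref{th41} verbatim, the geometric smallness of these prefactors for $d>1$ being the decisive point you also identify. Your additional observation that the divergence structure forces $\Pi g_n=0$, hence $W_n\equiv\int w|_{t=0}\,d\xi$ exactly, is a correct minor sharpening of the inequality $|W_{n+1}-W_n|\leq CL^{-n(d-1)}R_n^2$ that the paper's scheme would produce, but it does not change the architecture of the argument.
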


However, since $ w = \nabla v $  the choice $ w \in H^s_r $ excludes a number 
of interesting solutions.
A typical example of an initial condition for $ v:\R^2 \to \R^2 $ considered in  Theorem \ref{th42} which 
can not have been handled by Theorem \ref{th41} or Theorem \ref{th42} is 
$$ 
v_1(x_1,x_2) = v_0 \textrm{erf}(x_1) e^{-x_2^2}.
$$  
We find
$$ 
(w_{1j})_{j=1,2}(x_1,x_2) = \nabla v(x_1,x_2)  = (v_0 e^{-x_1^2/4} e^{-x_2^2},- 2 x_2 v_0 \textrm{erf}(x_1) e^{-x_2^2})
$$ 
which is not an element of $ H^s_r $. We remark without any proof that initial conditions of the form 
$ v = c_1 \textrm{erf}(x_1) + \widetilde{v}$ with $ c_1 \in \R $ and $ \widetilde{v} \in H^s_r$
can be included easily into the formulation and proof of Theorem \ref{th41}.

\subsection{Biharmonic map heat flow}

Next we come to the biharmonic map heat flow in case $ u : \mathbb{R}^d \to S^{m-1} \subset \R^m $.
Similar to the linear diffusion equation we have for the linearized problem 
$$
\partial_t v = -\Delta^2 v, \qquad v|_{t=0} = v_0
$$
that for spatially localized initial conditions the solutions 
decay asymptotically in a self similar way, i.e., for $ x \in \R^d $ the renormalized solution 
$
t^{d/4} v (x \sqrt{t},t) $ converges towards a multiple of a universal  limit function,  
$  V_{\rm lim} \varphi(x) $ with $ V_{\rm lim} \in \R $ and $ \varphi = \mathcal{F}^{-1} \widehat{\varphi} $
where $ \widehat{\varphi}(k) = e^{-|k|^4} $.  The same is true for the deviation $ v $ of a trivial spatially constant equilibrium $ u^* $ for the bi-harmonic map heat flow \eqref{start1}. Note that the deviation $ v $ 
satisfies a semilinear parabolic equation of the form 
\begin{eqnarray} \label{start6zz}
	\partial_t v &=& - \Delta^2 v 
	-(\Delta (\nabla v,\nabla v)  +\nabla \cdot (\Delta v,\nabla v) 
	+ (\nabla \Delta v, \nabla v))e_1\\&&
	-(\Delta (\nabla v,\nabla v)  +\nabla \cdot (\Delta v,\nabla v) 
	+ (\nabla \Delta v, \nabla v))v .\nonumber
\end{eqnarray}
Then we have:
\begin{theorem}\label{th45}
There exist 
$ \delta, C  > 0 $, such  that  for all  solutions $v $  of  \eqref{start6zz}
with  $ \| v|_{t = 0 } \|_{H^m_r}  \leq  \delta$ where $ m > d/2 +3 $ and $ r > d/2 + 1$
we have a  $ V_{\rm lim} \in \R^d $ such that
$$
\| t^{d/4}\, v( \cdot \sqrt{t},t) -
V_{\rm lim} \varphi(\cdot) \|_{H^m_r}
\leq C (1{+}t)^{-d/4} 
$$
for all  $  t \geq 0 $.
\end{theorem}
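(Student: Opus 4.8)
The plan is to imitate the discrete renormalization argument from the proof of Theorem \ref{th41}, adapting it to the fourth-order diffusion semigroup $e^{-t\Delta^2}$. Since the linear part is now $\partial_\tau v = -\Delta^2 v$, the natural parabolic scaling uses $L^{2n}$ in time with the fourth-order smoothing, so I would set
\begin{equation*}
v_n(\xi,\tau) = L^{nd} v(L^n\xi, L^{4n}\tau),
\end{equation*}
and check that $v_n$ satisfies the renormalized equation
\begin{equation*}
\partial_\tau v_n + \Delta_\xi^2 v_n = (\text{nonlinear terms carrying powers } L^{-\beta n})
\end{equation*}
on $\tau\in[L^{-4},1]$ with initial condition $v_n(\xi,L^{-4}) = L^d v_{n-1}(L\xi,1)$. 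The first task is the bookkeeping: each spatial derivative in \eqref{start6zz} produces a factor $L^{-n}$ when passing from $x$ to $\xi$, and the nonlinearity contains at most four derivatives (from $\Delta(\nabla v,\nabla v)$ and $(\nabla\Delta v,\nabla v)$). I would verify that the overall prefactor on every nonlinear term is of the form $L^{-\gamma n}$ with $\gamma>0$, so that the nonlinearity is asymptotically irrelevant and the renormalized sequence converges formally to the linear biharmonic diffusion $\partial_\tau v=-\Delta^2 v$, whose renormalization limit is the universal profile $\varphi$ with $\widehat\varphi(k)=e^{-|k|^4}$.

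Next I would replicate the three analytic ingredients from Lemma \ref{lem42}, Corollary \ref{coro43}, and Lemma \ref{lem7n}, now using the fourth-order semigroup. The projection $\Pi v_n=\int v_n\,d\xi$ is again conserved by the linear flow because $\widehat{e^{-s\Delta^2}f}(0)=\widehat f(0)$, so the definitions $V_{n+1}=\Pi v_n|_{\tau=1}$ and $\rho_{n+1}=v_n(\cdot,1)-V_{n+1}\varphi$ carry over verbatim. The key linear estimates are: first, the low-frequency contraction $\|e^{(1-L^{-4})\Delta^2_\xi} L^d\rho_n(L\cdot)\|_{H^m_r}\leq (C/L)\|\rho_n\|_{H^m_r}$, using that $\rho_n$ has mean zero and $\widehat\rho_n\in C^1_b$ (this requires $r>d/2+1$, exactly as stated); and second, the smoothing bound for the Duhamel term, where the factor $(\tau-s)^{-1/2}$ in the harmonic case is replaced by the integrable fourth-order smoothing rate for derivatives, so that $g_n$ still gains a factor $L^{-\gamma n}R_n^2(1+R_n^2)$. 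Here the requirement $m>d/2+3$ becomes essential, because the semigroup must map the nonlinearity (which lives at regularity roughly $m-3$ after applying up to three derivatives in the worst term) into $H^m_r$ while keeping the smoothing integrable in $s$; I would state a mapping estimate $\|e^{(\tau-s)\Delta^2}\|_{H^{m-3}_2\to H^m_r}\leq C(\tau-s)^{-3/4}$ and check $3/4<1$.

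With these in hand the closed system of inequalities for $(V_n,\rho_n)$ is formally identical to the one at the end of the proof of Theorem \ref{th41}: one shows $|V_{n+1}-V_n|\leq CL^{-\gamma n}R_n^2(1+R_n^2)$ and $r_{n+1}\leq (C/L)r_n + CL^{-\gamma n}R_n^2(1+R_n^2)$, then uses the fourth-order analogue of Lemma \ref{lem7n} to bound $R_n\leq C L^{\kappa}(|V_n|+r_n)$ (with $\kappa$ the appropriate power of $L$ from the one-step growth of the initial data in $H^m_r$, playing the role of $L^{5/2}$). Choosing $L_0$ large, $n_0$ large, and then $\delta$ small makes the quantities grow for only finitely many steps, after which $V_n\to V_{\rm lim}$ geometrically and $r_n\to 0$. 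Since this holds for all $L\in[L_0,L_0^2]$, undoing the scaling gives the stated self-similar decay with rate $(1+t)^{-d/4}$ (the exponent $d/4$ rather than $d/2$ reflecting the fourth-order time scaling $t\mapsto L^4$).

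The main obstacle I expect is the derivative counting and regularity threshold in the nonlinearity. Equation \eqref{start6zz} is considerably more delicate than \eqref{start1zz}: its worst term $(\nabla\Delta v,\nabla v)$ carries three derivatives on one factor, and I must confirm that after the renormalization the prefactor is genuinely a negative power of $L^n$ and, simultaneously, that the semigroup smoothing exponent stays strictly below $1$ so the Duhamel time integral converges — this is precisely what forces $m>d/2+3$ and is the step most likely to hide a sign or exponent error. Everything else is a faithful transcription of the second-order argument.
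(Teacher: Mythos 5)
Your proposal follows the paper's own route exactly: the paper proves Theorem \ref{th45} by the same discrete renormalization scheme as Theorem \ref{th41}, with the fourth-order semigroup replacing the heat semigroup and the nonlinear prefactors again decaying like $L^{-nd}$, and your accounting of the smoothing exponent $3/4<1$ and the role of $m>d/2+3$ matches what is needed. The one discrepancy is in your favour: your scaling $v_n(\xi,\tau)=L^{nd}v(L^n\xi,L^{4n}\tau)$ is the one that actually preserves $\partial_\tau+\Delta_\xi^2$ (so the renormalized solution is $t^{d/4}v(\cdot\,t^{1/4},t)$), whereas the paper's displayed scaling $v(L^{2n}\xi,L^{4n}\tau)$ and the $\sqrt{t}$ in the statement of Theorem \ref{th45} appear to be typos carried over from the second-order case.
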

\noindent
{\bf Proof.}
As above
	instead of solving  \eqref{start1zz} directly we consider an equivalent  sequence of 
	problems which converges formally towards the linearized  equation.
	For this we let
\begin{equation} \label{rgscalingbih}
v_n(\xi,\tau) = L^{n d}v(L^{2n} \xi ,L^{4n}\tau) ,
\end{equation}
with  $L > 1$ fixed and  $n \in \N$. Then  $v_n$ satisfies 
\begin{eqnarray*}
\lefteqn{(\partial_{\tau} v_n- \Delta_{\xi} v_n)=  L^{n(d + 8) } (\partial_t v-\Delta_x v)} \\
&=& L^{n(d+8)} (-(\Delta_x (\nabla_x v,\nabla_x v)  +\nabla_x \cdot (\Delta_x v,\nabla_x v) 
	+ (\nabla_x \Delta_x v, \nabla_x v))e_1\\&&
	-(\Delta_x (\nabla_x v,\nabla_x v)  +\nabla_x \cdot (\Delta_x v,\nabla_x v) 
	+ (\nabla_x \Delta_x v, \nabla_x v))v) \\
&=& L^{n(d+8)} (-(L^{-4n}\Delta_{\xi} (L^{-n(d+2)}\nabla_{\xi} v,L^{-n(d+2)} \nabla_{\xi} v) \\ &&  +L^{-2n}\nabla_{\xi} \cdot (L^{-n(d+4)}\Delta_{\xi} v,L^{-n(d+2)} \nabla_{\xi} v) \\&&
	+ (L^{-n(d+6)} \nabla_{\xi} \Delta_{\xi} v, L^{-n(d+2)}\nabla_{\xi} v))e_1\\&&
	-(L^{-4n} \Delta_{\xi} (L^{-n(d+2)}\nabla_{\xi} v,L^{-n(d+2)} \nabla_{\xi} v)  \\ &&
	+L^{-2n}\nabla_{\xi} \cdot (L^{-n(d+4)} \Delta_{\xi} v,L^{-n(d+2)} \nabla_{\xi} v) \\&&
	+ (L^{-n(d+6)}\nabla_{\xi} \Delta_{\xi} v,L^{-n(d+2)} \nabla_{\xi} v))L^{-nd} v)
\\	&=& L^{-nd} (-(\Delta_{\xi} (\nabla_{\xi} v,\nabla_{\xi} v) 
 +\nabla_{\xi} \cdot (\Delta_{\xi} v,\nabla_{\xi} v) 
	+ ( \nabla_{\xi} \Delta_{\xi} v, \nabla_{\xi} v)))e_1 \\ &&
	+L^{-2nd} (-( \Delta_{\xi} (\nabla_{\xi} v, \nabla_{\xi} v)  
	+\nabla_{\xi} \cdot ( \Delta_{\xi} v,\nabla_{\xi} v) 
	+ (\nabla_{\xi} \Delta_{\xi} v, \nabla_{\xi} v))v)
\end{eqnarray*}
for $ \tau \in [L^{-4},1] $,
with initial condition
\begin{equation}
v_n(\xi,L^{-4})=L^d v_{n-1}(L^2 \xi,1).
\end{equation}
As for the harmonic map heat flow the influence of the nonlinear terms vanishes
with a geometric rate for  $n \to \infty$. In the limit $n \to \infty$ we obtain 
the  linearized problem.
The rest of the proof follows the associated parts of the proof of Theorem \ref{th41}
almost line for line. \qed

\end{document}